\newtheorem{theorem}{Theorem}[section]
\newtheorem{lemma}[theorem]{Lemma}
\theoremstyle{definition}
\newtheorem{definition}[theorem]{Definition}
\newtheorem{remark}[theorem]{Remark}
\newtheorem{question}[theorem]{Question}
\numberwithin{equation}{section}
\DeclareMathOperator{\clop}{clop} 
\DeclareMathOperator{\iden}{id} 
\DeclareMathOperator{\mS}{\mathcal{S}} 
\DeclareMathOperator{\V}{\mathcal{V}}
\DeclareMathOperator{\B}{\mathcal{B}} 
\DeclareMathOperator{\cov}{cov} 
\DeclareMathOperator{\lc}{lc} 
\DeclareMathOperator{\fix}{fix} 
\DeclareMathOperator{\ext}{ext} 
\DeclareMathOperator{\Pow}{Pow}
\newcommand{\Ws}{\mathbf{W}^*}
\newcommand{\aN}{\alpha \mathbb{N}}
\newcommand{\bN}{\beta \mathbb{N}}
\newcommand{\N}{\mathbb{N}}
\newcommand{\Comp}{\mathbf{Comp}}
\newcommand{\coC}{\mathbf{coComp}}
\newcommand{\hoWs}{\mathbf{hoW}^{*}}
\newcommand{\hoW}{\mathbf{hoW}}
\begin{document}


\baselineskip=17pt


\title[Minimum spaces and vector lattices]{Some minimum topological spaces, and vector lattices}

\author{Ricardo E. Carrera}
\address{Department of Mathematics \\
Nova Southeastern University \\
Fort Lauderdale, FL 33328 \\ USA}
\email{ricardo@nova.edu}

\author{Anthony W. Hager}
\address{Department of Mathematics and Computer Science \\
Wesleyan University \\ Middletown, CT 06459 \\ USA}
\email{email: ahager@wesleyan.edu}

\author[B. Wynne]{Brian Wynne$^*$}
\address{Department of Mathematics \\
 Lehman College, CUNY \\
 Gillet Hall, Room 211 \\
 250 Bedford Park Blvd West \\
 Bronx, NY 10468 \\
 USA}
\email{brian.wynne@lehman.cuny.edu}

\date{}

\begin{abstract}
We investigate the existence of compact Hausdorff spaces $X$ that are minimum with respect to $cX=K$ for some fixed covering operator $c$ and compact Hausdorff space $K$ with $cK=K$. Then, using the Yosida representation theorem, we show how that situation relates to the existence of Archimedean vector lattices $A$ with distinguished strong unit that are  minimum with respect to $hA=H$ for some fixed hull operator $h$ and vector lattice $H$ with $hH=H$. Among others, we obtain answers for $c=g$ (the Gleason covering operator), $c=qF$ (the quasi-$F$ covering operator), $h = u$ (the uniform completion operator), and $h=e$ (the essential completion operator).
\end{abstract}

\subjclass[2020]{Primary 54D80, 06F20; Secondary 54C10, 08C05}

\keywords{compact Hausdorff space, covering operator, vector lattice, hull operator}

\maketitle

\section{Introduction}

Let $\Comp$ be the category of compact Hausdorff spaces with continuous maps. For any $\Comp$-object $K$ (we usually write $K \in \Comp$ for short), the set of all pairs $(X,f)$ where $X \xleftarrow{f} K$ is a cover in $\Comp$ is (modulo an equivalence relation) a complete upper semilattice with respect to a natural partial order. That partial order is inherited by $\{ X \in \Comp : cX=K \}$, for any $K \in \Comp$ and any covering operator $c$ on $\Comp$ (such a $c$ assigns to each $X \in \Comp$ a cover $X \xleftarrow{c_X} cX$). One of the two main questions addressed in this paper is:  

\begin{question}\label{Q1}
For what covering operators $c$ on $\Comp$, and for what $K \in \Comp$ with $cK=K$, does $\{ X \in \Comp : cX=K \}$ have the minimum, and in that case what is it?
\end{question} 

After giving some preliminary information on covering operators and hull operators in Sections \ref{prelim1}-\ref{YosidaSec}, we establish answers to \ref{Q1} in Section \ref{Q1answers} for $c = \iden$ (the identity covering operator), $c=a(\gamma)$ (an atom in the lattice of covering operators), $c=g$ (the Gleason covering operator), and $c=qF$ (the quasi-$F$ covering operator). With the help of the Yosida representation theorem, Sections \ref{BAstuff} and \ref{Q2answers} transfer the results of Section \ref{Q1answers} into answers to a related question for Boolean algebras and to our second main question, which concerns the category $\Ws$ of Archimedean vector lattice with distinguished strong unit and unit-preserving vector lattice homomorphisms:

\begin{question}\label{Q2}
For what hull operators $h$ on $\Ws$, and for what $H \in \Ws$ with $hH=H$, does $\{ A \in \Ws : hA=H \}$ have the minimum, and in that case what is it?
\end{question}




\section{Preliminaries on covering operators}\label{prelim1}

We give here a quick review of covers and covering operators on $\Comp$. Our main references are \cite{H89}, \cite{PW88}, and \cite{CH11}.

All spaces are assumed to be Tychonoff, and are usually compact Hausdorff. If $X$ is a space, then a pair $(Y,f)$ is called a \emph{cover} of $X$ if $Y$ is a space and $X \xleftarrow{f} Y$  is a perfect irreducible continuous surjection; the map $f$ itself is called a \emph{covering map}. Here, \emph{perfect} means that the map is closed with all point-inverses compact, and \emph{irreducible} means that $f(F) \neq X$ for any proper closed set $F$ in $Y$. 

For two covers $(X_i,f_i)$ ($i=1,2$) of $K \in \Comp$, we write $(X_1, f_1) \leq (X_2,f_2)$ if there is a continuous $X_1 \xleftarrow{f} X_2$ such that $f_2 = f_1 \circ f$; in that case $f$ is unique and $(X_2,f)$ is a cover of $X_1$. If both $(X_1,f_1) \leq (X_2, f_2)$ and $(X_2, f_2) \leq (X_1,f_1)$, then the $f$ associated with the first inequality is a homeomorphism. The preorder $\leq$ induces an equivalence relation on the collection of all covers of $K$; we denote that collection $\cov K$. Usually, we abuse notation and identify a cover $(X,f)$ of $K$ with the equivalence class in $\cov K$ that it represents. It turns out that the relation $\leq$ makes $\cov K$ into a complete lattice. Indeed, that $\cov K$ has a top element is a consequence of the following fact, which is due to Gleason \cite{G58}.

\begin{theorem}\label{Gleason}
If $E \in \Comp$, then $\cov E = \{ E \}$ if and only if $E$ is extremally disconnected (iff $X$ is projective in $\Comp$).  Moreover, each $X \in \Comp$ has a maximum cover $(gX,g) \in \cov X$, called the Gleason (or projective) cover.
\end{theorem}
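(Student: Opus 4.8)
The plan is to prove the three equivalences in the first sentence and then construct the maximum cover. For the first sentence, I would establish the cycle: (a) $E$ extremally disconnected $\Rightarrow$ $E$ projective in $\Comp$; (b) $E$ projective $\Rightarrow$ $\cov E = \{E\}$; (c) $\cov E = \{E\}$ $\Rightarrow$ $E$ extremally disconnected. For (a), given a continuous surjection $q\colon Y \to Z$ and a continuous $f\colon E \to Z$, form the pullback $P = \{(e,y) : f(e) = q(y)\} \subseteq E \times Y$ with its two projections $\pi_E, \pi_Y$. Since $q$ is a surjection, $\pi_E$ is a continuous surjection of compact Hausdorff spaces; by Zorn's lemma choose a minimal closed subset $C \subseteq P$ with $\pi_E(C) = E$, so $\pi_E|_C$ is an irreducible (perfect, being a closed map of compacta) surjection onto $E$. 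When $E$ is extremally disconnected, an irreducible perfect surjection onto $E$ must be a homeomorphism (its inverse exists because the image of any open set has the same closure as an open set, and irreducibility forces injectivity on a dense set, then everywhere by a closedness argument). Composing $(\pi_E|_C)^{-1}$ with $\pi_Y|_C$ gives the desired lift $E \to Y$. For (b), any cover $E \xleftarrow{f} X$ is in particular a continuous surjection, so projectivity yields a continuous section $s\colon E \to X$ with $f \circ s = \mathrm{id}_E$; then $s(E)$ is a closed subset of $X$ mapping onto $E$, so irreducibility gives $s(E) = X$, making $f$ a homeomorphism. For (c), if $E$ is not extremally disconnected there is an open $U$ with $\overline{U}$ not open; one builds a nontrivial cover by a standard construction (e.g.\ gluing two copies of $E$ along the boundary of $\overline U$, or passing to the Stone space of the Boolean algebra of regular closed sets), contradicting $\cov E = \{E\}$.

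For the "moreover" part, I would realize $gX$ as the Stone space $\mathbb{S}(\R X)$ of the complete Boolean algebra $\R X$ of regular closed subsets of $X$, with the covering map $g\colon \mathbb{S}(\R X) \to X$ sending an ultrafilter $\mathcal{U}$ on $\R X$ to the unique point in $\bigcap_{F \in \mathcal{U}} F$. One checks: $\mathbb{S}(\R X)$ is compact Hausdorff and extremally disconnected (Stone space of a complete Boolean algebra); $g$ is well defined, continuous, surjective, and irreducible (the preimage of a regular closed $F$ is the clopen set $\{\mathcal{U} : F \in \mathcal{U}\}$, which is proper when $F \neq X$), hence perfect. Thus $(gX,g) \in \cov X$. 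Maximality: given any cover $X \xleftarrow{f} Y$, since $\mathbb{S}(\R X)$ is projective by part (a) and $f$ is a surjection, the map $g\colon \mathbb{S}(\R X) \to X$ lifts through $f$ to a continuous $h\colon \mathbb{S}(\R X) \to Y$ with $f \circ h = g$; irreducibility of $g$ forces $h$ to be a covering map (a closed subset of $\mathbb{S}(\R X)$ mapping onto $Y$ would map onto $X$ under $f\circ h = g$, hence be everything), so $(\mathbb{S}(\R X), g) \geq (Y,f)$ in $\cov X$.

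I expect the main obstacle to be the careful verification in part (a) that an irreducible perfect surjection onto an extremally disconnected space is a homeomorphism, together with the bookkeeping needed to produce the minimal closed subset $C$ and to see that $\pi_E|_C$ is irreducible — these compactness-and-irreducibility arguments are where the real content sits. The Stone-space construction of $gX$ and the verification that $g$ is irreducible are routine once the Boolean-algebraic description of $\R X$ is in hand (this is precisely the kind of material covered in \cite{PW88}), and the lifting argument for maximality is then formal. Since the theorem is quoted from \cite{G58} and standard, I would likely present only the construction of $(gX,g)$ and the maximality argument in detail, citing \cite{PW88} or \cite{H89} for the equivalences, though a self-contained treatment along the lines above is entirely feasible.
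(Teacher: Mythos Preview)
The paper does not give a proof of this theorem; it is stated in the preliminaries section as a classical result attributed to Gleason \cite{G58}, with \cite{H89} and \cite{PW88} cited as general references for covers. Your proposal is a correct outline of the standard proof (essentially the treatment in \cite{PW88}: projectivity via a minimal closed set in the pullback, the Stone space of regular closed sets as the absolute, and the lifting argument for maximality), and you already anticipate in your final paragraph that the paper likely just cites the result --- which is exactly what it does.
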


Let $c$ be a function assigning to each $X \in \Comp$ a $(cX,c_X) \in \cov X$, and let $\fix(c) = \{ X \in \Comp : c_X \mbox{ is a homeomorphism} \}$. Such a $c$ is a \emph{covering operator} on $\Comp$  if $(cX,c_X)$ is the minimum in $\{ (Y,f) \in \cov X: Y \in \fix(c) \}$; in that case we call $\fix(c)$ the \emph{covering class} associated with $c$ . For example, the $g$ in \ref{Gleason} may be viewed as such a covering operator, whose associated covering class is the class of all extremally disconnected spaces in $\Comp$; we call $g$ the Gleason covering operator.

Let $\coC$ be the (proper) class of all covering operators on $\Comp$. For the reader familiar with  category theory, the elements of $\coC$ are precisely the coreflections in $\Comp^{\#}$, where $\Comp^{\#}$ is the category with the same object class as $\Comp$ but whose morphisms are only the covers in $\Comp$. One may partially order $\coC$ by letting $c\leq d$ just in case $(cX,c_X) \leq (dX,d_X)$ for every $X \in \Comp$; and with this order $\coC$ becomes a complete lattice with bottom the identity operator and top the Gleason operator $g$. Other members of $\coC$ will be discussed as they arise in Section \ref{Q1answers}.

In addition to partially ordering the covers above a fixed space, one may also partially order the spaces covered by a fixed space in a similar way: if $X_i \xleftarrow{f_i} K$ ($i=1,2$) are covers in $\Comp$, then we write $(X_1,f_1) \leq (X_2,f_2)$ just in case there is  a continuous $X_1 \xleftarrow{f} X_2$ such that $f_1 = f \circ f_2$. This $\leq$ is also a preorder that leads to a partial order on the set of equivalence classes determined by the preorder (one may establish this as in \cite[Theorem 8.4(e)]{PW88}). In fact, with that order, the set of equivalence classes forms a complete upper semilattice (\cite[Theorem 8.4(f)]{PW88}). 

To simplify the notation, we make the following

\begin{definition}\label{S(K,c)defn}
Suppose $c \in \coC$ and $K \in \Comp$ with $K = cK$. Let 
\[
\mS (K,c) \equiv \{ X \in \Comp : cX = K \}.
\]
\end{definition}

Thus \ref{Q1} may now be expressed by asking about the minimum in $\mS(K,c)$, where the order is  inherited from the order described above on the collection of all spaces of which $K$ is a cover (here we are identifying the space $X$ with the pair $(X,c_X)$).


\section{Preliminaries on hull operators}\label{prelim2}

Here we give a brief review of essential embeddings and hull operators. This is in some regards dual to what was discussed in Section \ref{prelim1}. We assume some familiarity with these concepts; our main references are \cite{LZ71} (for vector lattices), \cite{HS79} (for essential embeddings), and \cite{CH11} (for hull operators) .

If $A \in \Ws$, then a pair $(B,f)$ is called an \emph{essential extension} of $A$ if $B \in \Ws$ and $A \xrightarrow{f} B$ is an \emph{essential embedding}, i.e., if $f$ is a $\Ws$-embedding such that whenever $B \xrightarrow{f_1} C$ is a $\Ws$-morphism and $f_1 \circ f$ is monic, then $f_1$ is monic (in $\mathbf{W}^*$, monic means injective). Equivalently, a $\Ws$-embedding $A \xrightarrow{f} B$ is essential if for every $0 < b \in B$ there is $0 < a \in A$ such that $f(a) \leq nb$ for some $n \in \N$ (this is sometime referred to as $A$ being ``large" in $B$, see \cite[Theoreme 11.1.15]{BKW77}).

For two essential extensions $(B_i,f_i)$ $(i=1,2)$ of $A \in \Ws$, we write $(B_1, f_1) \leq (B_2, f_2)$ if there is a $\Ws$-morphism $B_1 \xrightarrow{f} B_2$ such that $f_2 = f \circ f_1$; in that case $f$ is unique and $(B_2,f)$ is an essential extension of $B_1$. If both $(B_1,f_2) \leq (B_2,f_2)$ and $(B_2,f_2) \leq (B_1,f_1)$, then the $f$ associated with the first inequality is a $\Ws$-isomorphism. The preorder $\leq$ induces an equivalence relation on the collection of all essential extensions of $A$; we denote that collection $\ext A$. It turns out that the relation $\leq$ makes $\ext A$ into a complete lattice. Indeed, that $\ext A$ has a top element is a consequence of the following fact, which is due to Conrad \cite[Theorem 3.3]{C71}.

\begin{theorem}\label{essclosure}
If $A \in \Ws$, then $\ext A = \{ A \}$ if an only if $A = C(X)$ for some extremally disconnected $X \in \Comp$. 
Moreover, each $A \in \Ws$ has a maximum essential extension $(eA,e_A) \in \ext A$, called the essential completion of $A$.
\end{theorem}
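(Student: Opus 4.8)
The plan is to establish Theorem~\ref{essclosure} as the exact $\Ws$-mirror of Theorem~\ref{Gleason}, transporting Gleason's theorem across the Yosida functor. Anticipating the Yosida representation theorem (Section~\ref{YosidaSec}), I would use that each $A \in \Ws$ is realized as a point-separating vector sublattice containing the constants inside $C(YA)$ for a compact Hausdorff space $YA$ (the Yosida space of $A$), with $YA \cong X$ when $A = C(X)$, and with this representation functorial. The bridge between the two theorems is a dictionary translating essential embeddings in $\Ws$ into covers in $\Comp$, and I would first record three facts: (i) the Yosida embedding $A \hookrightarrow C(YA)$ is essential --- given $0 < f \in C(YA)$, use point-separation, the constants, and the lattice operations to manufacture $0 < a \in A$ supported on a neighborhood on which $f$ is bounded below, so that $a \le nf$ for a suitable $n \in \N$; (ii) if $(W,p)$ is a cover of $Z$ in $\Comp$, then $\varphi \mapsto \varphi \circ p$ is an essential embedding $C(Z) \hookrightarrow C(W)$, essentiality being a direct restatement of the irreducibility of $p$; (iii) conversely, an essential embedding $A \hookrightarrow B$ in $\Ws$ induces (functorially, and compatibly with the Yosida embeddings) a cover $(YB,h)$ of $YA$ --- injectivity forces $h$ to be a perfect surjection, and essentiality forces $h$ to be irreducible. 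I would also note that composites of essential embeddings are essential, which is immediate from the elementwise characterization of essentiality recalled in Section~\ref{prelim2}.

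Granting the dictionary, the assembly is short. Fix $A \in \Ws$ and let $(gYA, g)$ be the Gleason cover of $YA$ supplied by Theorem~\ref{Gleason}; it is extremally disconnected. By (i) and (ii) the composite $A \hookrightarrow C(YA) \hookrightarrow C(gYA)$ (the second map being $\varphi \mapsto \varphi \circ g$) is an essential embedding, so $C(gYA)$ is an essential extension of $A$; call it $eA$. If $(B,f)$ is any essential extension of $A$, then by (iii) $YB$ is a cover of $YA$, so the maximality clause of Theorem~\ref{Gleason} gives a covering map $q \colon gYA \to YB$ with $g = h \circ q$; composing the Yosida embedding $B \hookrightarrow C(YB)$ with $\varphi \mapsto \varphi \circ q$ yields a $\Ws$-embedding $B \hookrightarrow C(gYA) = eA$, which one checks lies over $A$ using the compatibility in (iii) and the identity $g = h \circ q$. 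Hence $eA$ is the top element of $\ext A$, which is the ``moreover'' clause. For the equivalence: if $\ext A = \{A\}$, then in particular $eA = A$, i.e. $A \cong C(gYA)$ with $gYA$ extremally disconnected; conversely, if $A = C(X)$ with $X$ extremally disconnected, then $YA \cong X$ and $\cov X = \{X\}$ by Theorem~\ref{Gleason}, so $gYA \cong X$ and $eA = C(X) = A$, whence $\ext A = \{A\}$ because $eA$ sits on top of $\ext A$.

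The main obstacle is fact (iii): verifying that an essential $\Ws$-embedding really does produce an irreducible perfect surjection of Yosida spaces, and that the construction is functorial and compatible enough that the chain $A \hookrightarrow B \hookrightarrow C(YB) \hookrightarrow C(gYA)$ is a chain of maps over $YA$ (equivalently, over $A$). This is the heart of the $\Comp$--$\Ws$ duality; the care needed is in the direction of the arrows and in not conflating $B$ with its uniform completion $C(YB)$. Facts (i), (ii), stability of essential embeddings under composition, and the bookkeeping with Theorem~\ref{Gleason} are routine by comparison. Alternatively, one could simply invoke Conrad's original lattice-theoretic argument \cite[Theorem 3.3]{C71}, in which the existence of $eA$ comes from a Zorn's-lemma exhaustion together with a cardinality bound on essential extensions and the characterization is read off from Dedekind (and lateral) completeness; but the route above has the advantage of displaying Theorems~\ref{Gleason} and~\ref{essclosure} as two faces of a single statement, matching the spirit of the rest of the paper.
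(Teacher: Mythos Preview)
The paper does not supply its own proof of Theorem~\ref{essclosure}; it simply attributes the result to Conrad and cites \cite[Theorem~3.3]{C71}. So there is no ``paper's approach'' to compare against beyond that citation.

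Your proposed proof is correct. The dictionary items (i)--(iii) are exactly the content of Theorem~\ref{yosida} (parts (ii) and (iv)) together with the elementwise characterization of essentiality in Section~\ref{prelim2}, and the compatibility check you flag as the main obstacle is the one-line computation $\nu_B(f(a)) \circ q = (\nu_A(a)\circ h)\circ q = \nu_A(a)\circ g$, using covariance from \ref{yosida}(iv) and $g = h\circ q$. Once $eA = C(gYA)$ is shown to dominate every essential extension, both directions of the equivalence fall out as you say.

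Two remarks worth making explicit. First, your argument uses Theorem~\ref{yosida} from Section~\ref{YosidaSec} to prove a statement placed in Section~\ref{prelim2}; there is no logical circularity (Theorem~\ref{yosida} does not depend on \ref{essclosure}), but it does mean your proof cannot be inserted at the point where \ref{essclosure} is stated without reorganizing the paper. Second, your route is genuinely different from Conrad's: his argument in \cite{C71} is intrinsic to lattice-ordered groups (a Zorn/cardinality construction of the maximal essential extension, then identification via Dedekind and lateral completeness), whereas yours is a transport of Theorem~\ref{Gleason} across the Yosida functor. The payoff of your approach is that it exhibits \ref{Gleason} and \ref{essclosure} as the same theorem on the two sides of the $\Comp$/$\Ws$ correspondence, which is precisely the viewpoint the paper adopts later when it shows $\mu(g)=e$ in Theorem~\ref{mu(g)=e}; indeed, your construction of $eA$ as $C(gYA)$ \emph{is} that identity. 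Conrad's proof, by contrast, is self-contained and does not require the representation theory.
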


Note: throughout this paper $C(X)$ denotes the vector lattice of all real-valued continuous functions on the space $X$; if $K \in \Comp$, then $C(K) \in \Ws$, where we implicitly take the distinguished strong unit to be $\chi(X)$, i.e., the constant function on $X$ with value 1.

Let $h$ be a function assigning to each $A \in \Ws$ a pair $(hA,h_a) \in \ext A$, and let $\fix(h) = \{ A \in \Ws : h_A \mbox{ is a $\Ws$-isomorphism}\}$. Such an $h$ is a \emph{hull opertor} on $\Ws$ if $(hA,h_A)$ is the minimum in $\{ (B,f) \in \ext A : B \in \fix(h) \}$; in that case we call $\fix(h)$ the \emph{hull class} associated with $h$. For example, the $e$ in \ref{essclosure} may be viewed as such a hull operator, whose associated hull class is the class of all $C(X)$ with $X \in \Comp$ extremally disconnected; we call $e$ the essential completion operator. 


Let $\hoWs$ be the (proper) class of all hull operators on $\Ws$. For the reader familiar with category theory, the elements of $\hoWs$ are precisely the reflections in $\bf{W}^{*\#}$, where $\bf{W}^{*\#}$, is the category with the same object class as $\Ws$ but whose morphisms are only the essential embeddings in $\Ws$. One may partially order $\hoWs$ by letting $h \leq k$ just in case $(hA,h_A) \leq (kA,k_A)$ in $\ext A$ for every $A \in \Ws$; with this order, $\hoWs$ becomes a complete lattice with bottom the identity operator and top the essential closure operator $e$. Other members of $\hoWs$ will be discussed as they arise in Section \ref{Q2answers}.

In addition to partially ordering the essential extensions of a fixed $\Ws$-object, one may also partially order the $\Ws$-objects that have a common essential extension in a similar way: if $A_i \xrightarrow{f_i} B$ $(i=1,2)$ are essential $\Ws$-embeddings, then we write $(A_1, f_1) \leq (A_2,f_2)$ just in case there is an essential $\Ws$-embedding $A_1 \xrightarrow{f} A_2$ such that $f_1 = f_2 \circ f$. This $\leq$ is also a preorder that leads to a partial order on the set of equivalence classes determined by the preorder (one may establish this as in before \ref{S(K,c)defn}). In fact, with that order, the set of equivalence classes forms a complete upper semilattice.

To simplify notation, we make the following

\begin{definition}
Suppose $h \in \hoWs$ and $H \in \Ws$ with $H = hH$. Let 
\[
\V (H,h) \equiv \{ A \in \Ws : hA = H \}.
\]
\end{definition}

Thus \ref{Q2} may now be expressed by asking about the minimum in $\V(H,h)$, where the order is inherited from the order described above on the collection of all $\Ws$-objects of which $H$ is an essential extension (here we are identifying the $\Ws$-object $A$ with the pair $(A,h_A)$).


\section{The Yosida functor and $\mu$}\label{YosidaSec}

The connection between $\Ws$ and $\Comp$ that we exploit here arises from the following representation theorem, which developed from a result of Yosida \cite{Y42}. 

\begin{theorem}\label{yosida}
\begin{itemize}
\item[(i)] (The Yosida space) If $A \in \Ws$ with distinguished strong unit $u$, then $YA \in\Comp$, where $YA$ is the set of all $u$-maximal ideals of $A$ (ideals maximal with respect to not containing $u$) with the hull-kernel topology. We call $YA$ the \emph{Yosida space} of $A$.
\item[(ii)] (Existence) There is a $\Ws$-embedding $\nu_A \colon A \to C(YA)$ such that $\nu_A(A)$ separates the points of $YA$.
\item[(iii)] (Uniqueness) If $X \in \Comp$ and $\phi \colon A \to C(X)$ is a $\Ws$-embedding such that $\phi(A)$ separates the points of $X$, then there is a homeomorphism $\eta \colon YA \to X$ such that $\nu_A(a) = \phi(a) \circ \eta$ for every $a \in A$.
\item[(iv)] (Covariance) If $A \xrightarrow{f} B$ is a $\Ws$-morphism, then there is a continuous surjection $YG \xleftarrow{Yf} YH$ such that $\nu_B(f(a)) = \nu_A(a) \circ Yf$ for every $a \in A$. Moreover, $(B,f) \in \ext A$ if and only if  $(YH,Yf) \in \cov YG$. 
\end{itemize}
\end{theorem}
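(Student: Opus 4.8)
The plan is to assemble this from its classical constituents --- the Yosida and Kakutani--Krein representations, Hölder's theorem, and the essential-embedding/cover correspondence --- organised around one observation: $YA$ is naturally the space of \emph{characters} of $A$, where a character means a $\Ws$-morphism $A\to\mathbb R$, equivalently an $\ell$-homomorphism carrying the distinguished unit $u$ to $1$. The first point is that $\sigma\mapsto\ker\sigma$ is a bijection from the characters onto the $u$-maximal ideals. Such an ideal $M$ is prime: if $b,c\ge 0$, $b\wedge c=0$ and $b,c\notin M$, then the $\ell$-ideals $M+\langle b\rangle$ and $M+\langle c\rangle$ each contain $u$, forcing $u\le|m_1|+|m_2|\in M$ for suitable $m_i\in M$ (using $kb\wedge\ell c=0$ and $(p_1+q_1)\wedge(p_2+q_2)\le p_1+p_2+(q_1\wedge q_2)$ for positive elements), a contradiction. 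Hence $A/M$ is a totally ordered real vector lattice with strong unit $\bar u$ whose $\ell$-ideal of $\bar u$-infinitesimals omits $\bar u$ and so, by $u$-maximality of $M$, is $\{0\}$; and a totally ordered Archimedean real vector space with strong unit is $\mathbb R$ by Hölder, which supplies the character with kernel $M$.

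Next I would topologise $YA$ by pointwise convergence, i.e.\ with the initial topology induced by the evaluations $\widehat a\colon\sigma\mapsto\sigma(a)$ ($a\in A$), and check that this coincides with the hull--kernel topology --- the basic hull--kernel closed set $\{M:a\in M\}$ is the zero set of $\widehat a$, while $\{\sigma:\sigma(a)>t\}$ is the complement of the zero set of $\widehat{(a-tu)^+}$. Part~(i) then follows, since $YA$ sits as a closed subspace of the cube $\prod_{a\in A}[-\|a\|_u,\|a\|_u]$ (the three conditions defining a character being closed), hence is compact by Tychonoff, and is Hausdorff because distinct characters differ somewhere. For~(ii) I would set $\nu_A(a):=\widehat a$, which is continuous by the previous step and manifestly a $\Ws$-morphism into $C(YA)$ whose image separates points, so the only content is that $\nu_A$ is injective, i.e.\ $\bigcap\{M:M\in YA\}=\{0\}$. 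This is the one place the Archimedean hypothesis is genuinely needed: given $0<a$, pick $n$ with $c:=(na-u)^+>0$ (possible, else $na\le u$ for all $n$, contradicting that $A$ is Archimedean), take by Zorn an $\ell$-ideal $M$ maximal for $c\notin M$ --- it is prime, omits both $u$ and $a$, and has the image of $a$ non-infinitesimal in $A/M$ --- then divide out the infinitesimals of $A/M$ and apply Hölder to obtain a character nonzero on $a$.

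For~(iii) I would first show that $\eta'\colon X\to YA$, $\eta'(x)=\bigl(a\mapsto\phi(a)(x)\bigr)$, is a homeomorphism, and then take $\eta=(\eta')^{-1}$. Indeed each $\eta'(x)$ is a character, $\eta'$ is continuous, and injective because $\phi(A)$ separates the points of $X$; if some $\sigma\in YA$ missed the closed set $\eta'(X)$, the topology step of~(i) would give $a\in A$ with $\widehat a\equiv 0$ on $\eta'(X)$ --- i.e.\ $\phi(a)\equiv 0$, so $a=0$ --- yet $\widehat a(\sigma)\ne 0$, a contradiction. So $\eta'$ is a continuous bijection from a compact space onto a Hausdorff space, hence a homeomorphism; unwinding the definitions then gives $\nu_A(a)=\phi(a)\circ\eta$, and the choice $X=YA$, $\phi=\nu_A$ recovers the uniqueness of the Yosida space. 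For the first assertions of~(iv), put $(Yf)(\tau)=\tau\circ f$; since $f(u_A)=u_B$ this is a character of $A$, $Yf$ is continuous, and $\nu_B(f(a))=\nu_A(a)\circ Yf$ is immediate. When $f$ is injective --- in particular when $(B,f)\in\ext A$ --- $Yf$ is onto: given $M\in YA$, take by Zorn an $\ell$-ideal $N\subseteq B$ maximal for $f^{-1}(N)\subseteq M$, and verify $f^{-1}(N)=M$, $u_B\notin N$, and that $N$ is $u$-maximal, so that $Yf$ carries the character with kernel $N$ to the one with kernel $M$.

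The clause I expect to be the real obstacle is the ``moreover'' of~(iv): $(B,f)\in\ext A$ iff $(YB,Yf)\in\cov YA$. Since surjectivity and perfectness of $Yf$ are automatic here, this reduces to showing $f$ \emph{essential} --- equivalently, for each $0<b\in B$ there is $0<a\in A$ with $f(a)\le nb$ for some $n\in\N$ --- if and only if $Yf$ is \emph{irreducible} --- equivalently, for every nonempty open $V\subseteq YB$ there is a nonempty open $U\subseteq YA$ with $(Yf)^{-1}(U)\subseteq V$. The forward direction is clean: were $Yf$ not irreducible there would be a closed $F\subsetneq YB$ with $Yf(F)=YA$, hence (by density of $\nu_B(B)$ in $C(YB)$) some $b>0$ with $\nu_B(b)\equiv 0$ on $F$; then, for the $a>0$ provided by essentiality of $f$ applied to $b$, $\nu_A(a)\circ Yf=\nu_B(f(a))\le n\nu_B(b)$ would vanish on $F$, so $\nu_A(a)$ would vanish on $Yf(F)=YA$, forcing $a=0$ --- a contradiction. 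The converse is the delicate half: given $0<b\in B$, the irreducibility characterisation produces $y\in YA$ whose entire $Yf$-fibre lies in $\operatorname{coz}(\nu_B(b))$; let $\delta>0$ be the minimum of $\nu_B(b)$ on that (compact) fibre, and then --- by a Urysohn-type argument inside $YA$, using that $\nu_A(A)$ is uniformly dense in $C(YA)$ --- produce $0<a\in A$ with $0\le\nu_A(a)\le 1$ and $\operatorname{coz}(\nu_A(a))$ small enough that $\operatorname{coz}(\nu_B(f(a)))\subseteq\{\nu_B(b)>\delta/2\}$, whence $\nu_B(f(a))\le(2/\delta)\nu_B(b)$, i.e.\ $f(a)\le nb$ with $n=\lceil 2/\delta\rceil$, while $f(a)>0$ because $\nu_B(f(a))$ is positive at a point of the fibre over $y$. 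Making this translation between order-density and irreducibility precise, in both directions and while keeping in mind that $\nu_B$ embeds $B$ only densely in $C(YB)$, is the genuinely technical step; everything else is bookkeeping over standard structure theory.
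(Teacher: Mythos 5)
The paper does not prove Theorem \ref{yosida}: it is quoted as background, with the proof delegated to Yosida \cite{Y42} and to the $\mathbf{W}$-version in \cite{HR77}. So the only comparison available is with the classical arguments, and your reconstruction follows them faithfully and, as far as I can check, correctly: characters via primeness of $u$-maximal ideals plus H\"older for (i)--(ii) (including the right use of the Archimedean hypothesis, through a value of $(na-u)^{+}$ and the quotient by infinitesimals, to get injectivity of $\nu_A$), the compactness-plus-point-separation argument for (iii), and the essential-embedding/irreducible-cover dictionary for (iv). The two Urysohn/Stone--Weierstrass approximation steps you flag as the technical core are indeed routine and work as sketched: approximate a Urysohn function $g$ by some $a'$ in the dense image and replace it by $(a'-\tfrac13 u)^{+}$, using closedness of $Yf$ to shrink the cozero set of $\nu_A(a)$ into a neighborhood of $y$ whose preimage lies in $\{\nu_B(b)>\delta/2\}$. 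Two small points worth recording. First, in the converse half of the ``moreover'' you should observe at the outset that surjectivity of $Yf$ already forces $f$ to be injective (if $f(a)=0$ then $\nu_A(a)\circ Yf=\nu_B(f(a))=0$, hence $a=0$), since essentiality is a property of embeddings; this is one line but is needed before your $\delta$-argument is even meaningful. Second, your restriction of the surjectivity claim to injective $f$ is in effect a correction of the statement as printed: for a general $\Ws$-morphism $Yf$ need not be onto (e.g.\ evaluation $C([0,1])\to\mathbb{R}$), and $YG$, $YH$ there are typos for $YA$, $YB$; since the paper only applies (iv) to essential embeddings, your version covers everything that is used.
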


\begin{remark}
Theorem \ref{yosida} is actually a special case of a representation theorem for $\mathbf{W}$, the category of Archimedean vector lattices with distinguished weak unit and unit-preserving vector lattice homomorphisms; see \cite{HR77}. 
\end{remark}

In \cite[Section 5]{CH11}, a map $\mu$ from covering classes in $\Comp$ to hull classes of Archimedean lattice-ordered groups with weak unit is studied. Here we recast $\mu$ so that it yields hull operators in $\Ws$.

\begin{definition}
For $c \in \coC$ and $A \in \Ws$, let
$\mu(c)A = C(cYA)$ and $\mu(c)_A = \phi_{A,c} \circ \nu_A$, where $\nu_A$ is the Yosida embedding and $\phi_{A,c} \colon C(YA) \to C(cYA)$ is the essential $\Ws$-morphism induced by the cover $YA \xleftarrow{c_{YA}} cYA$, i.e., the map given by $\phi_{A,c}(f) = f \circ c_{YA}$ for $f \in C(YA)$. 
\end{definition}

\begin{theorem}
If $c \in \coC$, then $\mu(c) \in \hoWs$.
\end{theorem}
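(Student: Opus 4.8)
The plan is to verify the three defining properties of a hull operator for $\mu(c)$: that $\mu(c)_A$ is an essential $\Ws$-embedding into $\mu(c)A$, that $\mu(c)A$ lies in $\fix(\mu(c))$, and that $(\mu(c)A, \mu(c)_A)$ is the \emph{minimum} among essential extensions $(B,f)$ of $A$ with $B \in \fix(\mu(c))$. Throughout, the engine is Theorem~\ref{yosida}, especially part (iv), which sets up a bijection between $\ext A$ and $\cov YA$ that is compatible with the two partial orders; the strategy is to transport everything to $\Comp$, use that $c$ is a covering operator there, and transport back.

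\textbf{Step 1: $\mu(c)_A$ is an essential embedding.} The Yosida map $\nu_A \colon A \to C(YA)$ is an essential $\Ws$-embedding with dense image in the sense of separating points, and $\phi_{A,c} \colon C(YA) \to C(cYA)$ is the $\Ws$-morphism induced by the covering map $c_{YA} \colon cYA \to YA$. Since $c_{YA}$ is a perfect irreducible surjection, $\phi_{A,c}$ is an essential $\Ws$-embedding (this is the content of the ``if'' direction of the last sentence of Theorem~\ref{yosida}(iv), applied to $C(YA)$, whose Yosida space is $YA$ itself); composing two essential embeddings gives an essential embedding, so $\mu(c)_A \in \ext A$. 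One should also note $YA$ may be identified with the Yosida space of $C(YA)$ via \ref{yosida}(iii), so that the cover of $Y(C(YA))$ corresponding to $\phi_{A,c}$ is exactly $(cYA, c_{YA})$.

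\textbf{Step 2: $\mu(c)A \in \fix(\mu(c))$.} We must show $C(cYA)$ is fixed by $\mu(c)$, i.e. that $\mu(c)_{C(cYA)}$ is a $\Ws$-isomorphism, equivalently that $c_{Y(C(cYA))}$ is a homeomorphism. By \ref{yosida}(iii), $Y(C(cYA)) \cong cYA$; so we need $c_{cYA}$ to be a homeomorphism, i.e. $cYA \in \fix(c)$. But this is immediate from $c$ being a covering operator: $(cX, c_X) \in \cov X$ is the minimum cover of $X$ lying in $\fix(c)$, and idempotence $c(cX) = cX$ (equivalently $cX \in \fix(c)$) is part of the standard package for covering operators/coreflections — one can cite this from \cite{CH11} or derive it from the minimality defining $c$.

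\textbf{Step 3: minimality.} Let $(B,f) \in \ext A$ with $B \in \fix(\mu(c))$; we must produce an essential $\Ws$-embedding $\mu(c)A \to B$ factoring $f$ through $\mu(c)_A$. Apply $Y$: by \ref{yosida}(iv), $(YB, Yf) \in \cov YA$, and $B \in \fix(\mu(c))$ forces (by Step 2's analysis and \ref{yosida}(iii)) that $YB \in \fix(c)$. Since $(cYA, c_{YA})$ is by definition of covering operator the minimum element of $\{(Z,g) \in \cov YA : Z \in \fix(c)\}$, there is a covering map $cYA \xleftarrow{p} YB$ with $c_{YA} \circ p = Yf$. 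Now translate back: $p$ induces a $\Ws$-morphism $C(cYA) \xrightarrow{p^*} C(YB)$, and using \ref{yosida}(iv) again (the ``only if'' direction, since $p$ is a cover) together with the Yosida embedding $\nu_B \colon B \to C(YB)$ and the uniqueness statement \ref{yosida}(iii), one identifies $C(YB)$ with $B$ and checks $p^* \colon \mu(c)A \to B$ satisfies $f = p^* \circ \mu(c)_A$. Finally, $p^*$ is an essential embedding because $p$ is a cover, so this is an inequality in $\V$-order, giving $(\mu(c)A, \mu(c)_A) \leq (B,f)$.

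The main obstacle I expect is bookkeeping in Step 3: carefully matching the abstract $\Ws$-object $B$ with $C(YB)$ via the Yosida embedding and checking that the factoring morphism one gets in $\Comp$ really does pull back to a $\Ws$-morphism making the triangle $f = p^* \circ \mu(c)_A$ commute on the nose (not just up to the identifications). This requires invoking the uniqueness clause \ref{yosida}(iii) at the right place — that $\nu_B$ is \emph{the} point-separating embedding into $C(YB)$ — and tracking how the commuting relation $\nu_B(f(a)) = \nu_A(a) \circ Yf$ from \ref{yosida}(iv) interacts with $c_{YA} \circ p = Yf$. Everything else is a routine transfer along the order-isomorphism $\ext(-) \leftrightarrow \cov Y(-)$ supplied by Theorem~\ref{yosida}, combined with the already-established fact that $c$ is a genuine covering operator on $\Comp$.
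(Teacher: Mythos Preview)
Your proof is correct and follows essentially the same route as the paper's: both verify that $\mu(c)_A$ is an essential embedding as a composite of essential embeddings, then establish minimality by pushing the problem to $\cov YA$ via Theorem~\ref{yosida}(iv), invoking the covering-operator property of $c$ there, and pulling the resulting factorization back to $\Ws$. Your explicit Step~2 (idempotence, i.e.\ $\mu(c)A \in \fix(\mu(c))$) is a detail the paper leaves implicit, and where you work directly with $YB$ (having noted $YB \in \fix(c)$) the paper works with $cYB$ and composes with $\mu(c)_B^{-1}$; since $c_{YB}$ is a homeomorphism under the hypothesis $B \in \fix(\mu(c))$, these are the same argument.
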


\begin{proof}
First, $(\mu(c)A,\mu(c)_A) \in \ext A$ because both $\nu_A$ and $\phi_{A,c}$ are essential $\Ws$-embeddings (this follows from \ref{yosida}; note that $c_{YA} = Y\phi_{A,c}$), and a composition of essential $\Ws$-embeddings is an essential $\Ws$-embedding. 

Now, suppose $(B,f) \in \ext A$ and $B \xrightarrow{\mu(c)_B} \mu(c)B$ is a $\Ws$-isomorphism. We show that $(\mu(c)A,\mu(c)_A) \leq (B,f)$ in $\ext A$. Since a composition of covers is a cover, we see that $(cYB, Yf \circ c_{YB}) \in \cov YA$. Since $c  \in \coC$, it follows that $(cYA, c_{YA}) \leq (cYB, Yf \circ c_{YB})$ in $\cov YA$. So there is a cover $cYA \xleftarrow{Y\varphi} cYB$ that induces an essential $\Ws$-embedding $C(cYA) \xrightarrow{\varphi} C(cYB)$ such that the diagrams
\begin{displaymath}
\xymatrix{
  YA  & YB \ar[l]_{Yf} \\
  cYA \ar[u]^{Yc_{YA}} &  cYB \ar[l]^{Y\varphi}  \ar[u]_{Yc_{YB}}   }
\end{displaymath}
and
\begin{displaymath}
\xymatrix{
  A  \ar[d]_{\mu(c)_A}  \ar[r]^{f}  & B  \ar[d]^{\mu(c)_B} \\
 C(cYA) \ar[r]_{\varphi} &  C(cYB)    }
\end{displaymath}
 both commute. Thus $\mu(c)_B^{-1} \circ \varphi$ witnesses that $(\mu(c)A,\mu(c)_A) \leq (B,f)$ in $\ext A$.
\end{proof}

What follows are some basic properties of $\mu$. 

\begin{theorem}\label{mustuff}
\begin{itemize}
\item[(i)] $\mu$ is order-preserving and injective.
\item[(ii)] The range of $\mu$ is contained in $\{ h \in \hoWs : h \mbox{ is above } u \}$.
\item[(iii)] $\mu$ is not onto $\hoWs$: omitted are the ring-reflection $r$ and the projectable hull operator $p$. 
\end{itemize}
\end{theorem}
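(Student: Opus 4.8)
The plan is to dispatch (i) and (ii) by bookkeeping with the Yosida correspondence of Theorem \ref{yosida}, and to spend the real effort on the witnesses needed for (iii).

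\emph{Part (i).} For order-preservation, suppose $c \leq d$ in $\coC$, so $(cX,c_X) \leq (dX,d_X)$ in $\cov X$ for every $X$. Fixing $A \in \Ws$ and applying this with $X = YA$ yields a cover $cYA \xleftarrow{} dYA$ over $YA$; the induced essential $\Ws$-embedding $C(cYA) \to C(dYA)$ is checked (exactly as in the proof that $\mu(c) \in \hoWs$, using the relevant commuting squares) to commute with $\mu(c)_A$ and $\mu(d)_A$, giving $(\mu(c)A,\mu(c)_A) \leq (\mu(d)A,\mu(d)_A)$ in $\ext A$, hence $\mu(c) \leq \mu(d)$. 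For injectivity, suppose $\mu(c) = \mu(d)$ and evaluate at $A = C(K)$ for arbitrary $K \in \Comp$: by the uniqueness clause of Theorem \ref{yosida}, $YC(K) = K$ and $\nu_{C(K)} = \iden_{C(K)}$, so $\mu(c)C(K) = (C(cK),\phi_{C(K),c})$ with $Y\phi_{C(K),c} = c_K$, and similarly for $d$. Since the Yosida functor carries the order on $\ext C(K)$ onto the order on $\cov K$ (Theorem \ref{yosida}(iv)), the equality $(C(cK),\phi_{C(K),c}) = (C(dK),\phi_{C(K),d})$ in $\ext C(K)$ forces $(cK,c_K) = (dK,d_K)$ in $\cov K$; as $K$ was arbitrary, $c = d$.

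\emph{Part (ii).} The identity covering operator $\iden$ is the bottom of $\coC$, and $\mu(\iden)A = C(\iden\,YA) = C(YA)$ with $\mu(\iden)_A = \nu_A$. By the Stone--Weierstrass theorem $\nu_A(A)$ is uniformly dense in $C(YA)$, so $(C(YA),\nu_A)$ is the uniform completion of $A$; thus $\mu(\iden) = u$ (in particular $u \in \hoWs$). Since $\mu$ is order-preserving by (i) and $\iden \leq c$ for every $c \in \coC$, we get $u = \mu(\iden) \leq \mu(c)$, i.e. every member of the range of $\mu$ lies above $u$.

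\emph{Part (iii).} By (ii) it suffices to produce, for each of $r$ and $p$, a $\Ws$-object $A$ at which the operator in question fails to lie above $u$, i.e. $(uA,u_A) \not\leq (\,\cdot\,A,\,\cdot_A)$ in $\ext A$. I would take $A$ to be the vector lattice of all eventually constant real sequences, with strong unit the constant sequence $1$; then $A \in \Ws$ and, since $A$ separates the points of the one-point compactification $\N^+$ of the discrete naturals, $YA = \N^+$ and $uA = C(\N^+) \supsetneq \nu_A(A)$ (the sequence $n \mapsto 1/n$, $\infty \mapsto 0$, is a uniform limit from $A$ but is not eventually constant), so $(uA,u_A)$ is a proper essential extension. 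Now $A$ is closed under pointwise multiplication, hence is an Archimedean $\Phi$-algebra, so $rA = A$; and a short direct check shows $A$ is projectable — the principal band generated by an element with finite cozero set $S$ is the cardinal summand $\R^{S}$ with complement the sequences supported off $S$, and the principal band generated by an element with cofinite cozero set has finite-cozero complement — so $pA = A$. Thus $(rA,r_A) = (pA,p_A) = (A,\iden_A)$, which cannot dominate the proper essential extension $(uA,u_A)$; hence neither $r$ nor $p$ is above $u$, so neither is in the range of $\mu$, and $\mu$ is not onto.

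The main obstacle is the verification in (iii) that the eventually-constant-sequence algebra is \emph{simultaneously} a $\Phi$-algebra and projectable yet not uniformly complete; one should also confirm the relevant ``above $u$'' failure is a genuine strict inequality in $\ext A$, but that is immediate once $uA$ is identified as a proper essential extension. Everything in (i) and (ii) is routine transport through Theorem \ref{yosida} together with the already-proved fact that $\mu$ lands in $\hoWs$.
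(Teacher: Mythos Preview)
Your proof is correct and follows essentially the same approach as the paper. For (i) and (ii) the paper simply writes ``Evident'' and cites order-preservation plus $\mu(\iden)=u$, which you spell out; for (iii) the paper's witness is $F(X)$ for any infinite zero-dimensional compact $X$, and your eventually-constant sequences are exactly $F(\alpha\N)$, so you have chosen the simplest instance of the paper's family and verified the needed properties (ring, projectable, not uniformly complete) by hand rather than by citation.
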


\begin{proof}
(i) Evident.

(ii) By (i) and \ref{mu(id)=u} ($\iden$ below $c$ implies $\mu(\iden)$ below $\mu(c)$).

(iii) For a space $X$, let $F(X) \equiv \{ f \in C(X) : f(X) \mbox{ is finite} \}$. For any infinite zero-dimensional $X$, $F(X) = rF(X) = pF(X)$, but $F(X)$ is not $uF(X)$. (Using $h$ below $k$ if and only if $h(\Ws)$ contains $k(\Ws)$, which we know).  See \cite[Section 6]{HR77} for background on $r$; see, for example, \cite{HW24} and the references given there for background on $p$ (for $\ell$-groups).
\end{proof}

\begin{remark}
Actually, in \ref{mustuff}(ii) above, we have equality. This is not informative to the sequel here, and a proof seems tedious, and involves a map $\sigma$ of $\hoWs$, unfortunately not quite ranging in $\coC$, namely $\sigma(h)X \equiv YhC(X)$. This can fail to be idempotent, e.g., $\sigma(p)$ is not.

Some of the complications are observable in \cite{CH11}, where $\sigma$ is defined on a subclass of $\hoW$ where $\sigma$ is idempotent, and $\mu$ is later presented as a ``section for $\sigma$".

We omit further discussion here, intending to return to the $\mu$/$\sigma$ situation in a later paper.
\end{remark}

We end this section by identifying some particular images of $\mu$.

\begin{theorem}\label{mu(g)=e}
$\mu(g) = e$.
\end{theorem}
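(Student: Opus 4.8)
The statement $\mu(g) = e$ asserts that the hull operator obtained by transporting the Gleason covering operator through the Yosida functor is precisely the essential completion operator on $\Ws$. The plan is to show that both $\mu(g)$ and $e$ have the same fixed class and the same value on each object, using the characterization of $e$ in Theorem~\ref{essclosure} together with the covariance clause \ref{yosida}(iv).

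First I would unpack the definition: for $A \in \Ws$ we have $\mu(g)A = C(gYA)$ and $\mu(g)_A = \phi_{A,g} \circ \nu_A$, where $gYA$ is the Gleason cover of the Yosida space $YA$. The key input is \ref{yosida}(iv), which says that for a $\Ws$-morphism $A \xrightarrow{f} B$, the extension $(B,f)$ is essential iff $(YB, Yf) \in \cov YA$; combined with the fact (implicit in the construction of $\mu$) that $Y\phi_{A,g} = g_{YA}$ and $YC(X) \cong X$ for $X \in \Comp$, this lets me translate statements about $\ext A$ into statements about $\cov YA$. Then the fixed class of $\mu(g)$ is $\{A : g_{YA} \text{ is a homeomorphism}\} = \{A : YA \text{ is extremally disconnected}\}$, which by \ref{yosida}(iii) is exactly $\{C(X) : X \in \Comp \text{ extremally disconnected}\}$ — the hull class of $e$ from \ref{essclosure}. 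Since both $\mu(g)$ and $e$ are hull operators (so each assigns to $A$ the \emph{minimum} essential extension lying in the common fixed class), and $(\mu(g)A, \mu(g)_A) \in \ext A$ has its underlying object $C(gYA)$ in that class (because $Y C(gYA) \cong gYA$ is extremally disconnected by \ref{Gleason}), it follows that $eA \leq \mu(g)A$ in $\ext A$; conversely $\mu(g)A \leq eA$ by the minimality of $\mu(g)A$ among fixed-class essential extensions, so $(\mu(g)A, \mu(g)_A) = (eA, e_A)$ for every $A$, giving $\mu(g) = e$.

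Alternatively, and perhaps more cleanly, I would invoke \ref{mu(id)=u}-style reasoning at the top of the lattices: $g$ is the top of $\coC$ (stated in Section~\ref{prelim1}), $e$ is the top of $\hoWs$, and $\mu$ is order-preserving and injective by \ref{mustuff}(i), so $\mu(g)$ is the image of the top. It remains only to check that $\mu(g)$ actually equals the top of $\hoWs$ rather than merely being maximal in the range of $\mu$ — i.e., that there is no hull operator strictly above $\mu(g)$. This is exactly where the fixed-class computation above is needed: showing $\fix(\mu(g)) = \fix(e)$ forces $\mu(g) = e$ since a hull operator is determined by its fixed class.

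**Main obstacle.** The routine part is the diagram-chasing that identifies $Y\phi_{A,g}$ with $g_{YA}$ and recognizes $C(gYA)$ as sitting in the hull class of $e$. The genuine point requiring care is verifying that $\mu(g)_A$ is not just \emph{some} essential extension of $A$ with extremally disconnected Yosida space, but the \emph{minimum} such — equivalently, that passing $A \hookrightarrow C(YA)$ up to the Gleason cover does not ``overshoot.'' This follows because $g_{YA}\colon gYA \to YA$ is the \emph{minimum} cover with extremally disconnected domain (Theorem~\ref{Gleason} together with the definition of covering operator), and \ref{yosida}(iv) transports this minimality to $\ext A$; but one must be careful that the Yosida embedding $\nu_A$ itself contributes no essential extension beyond what is forced (it need not be onto $C(YA)$), so the argument must track the composite $\phi_{A,g} \circ \nu_A$ and use that an essential extension of an essential extension is essential, as already noted in the proof that $\mu(c) \in \hoWs$.
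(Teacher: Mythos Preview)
Your proposal is correct, but it takes a more elaborate route than the paper and the ``main obstacle'' you identify is not actually present for this particular operator. The paper's proof is essentially one line: by definition $\mu(g)A = C(gYA)$, the space $gYA$ is extremally disconnected, and therefore by Theorem~\ref{essclosure} the object $C(gYA)$ admits no proper essential extension; since $(\mu(g)A,\mu(g)_A)\in\ext A$, it must therefore be the maximum of $\ext A$, which is $(eA,e_A)$.

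The difference is that you argue via fixed classes (showing $\fix(\mu(g))=\fix(e)$ and invoking that hull operators are determined by their fixed classes), while the paper exploits the special feature that $e$ is the \emph{top} of $\hoWs$: any essential extension of $A$ whose underlying object lies in $\fix(e)$ is automatically maximal in $\ext A$, so no minimality check is needed. In particular, your worry about ``overshooting'' --- that $\phi_{A,g}\circ\nu_A$ might land above the minimum fixed extension --- evaporates here, because for $e$ the minimum and maximum fixed extension coincide (there is exactly one). Your fixed-class approach is the right template for a general $c\in\coC$ and would be necessary if $c$ were not the top; for $c=g$ the paper's shortcut via maximality is available and avoids the bookkeeping you flag in your obstacle paragraph.
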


\begin{proof}
By definition $\mu(g)A =C(gYA)$ and $\mu(g)_A = \phi_{A,g} \circ \nu_A = \nu_A$. Since $gYA$ is extermally disconnected, it follows from \ref{essclosure} that $C(gYA) = eA$ and $\mu(g)_A = e_A$.
\end{proof}

Let $u \in \hoWs$ be the uniform completion operator, i.e., the operator for which $uA = C(YA)$ and $u_A = \nu_A$. The name of this operator refers to the fact that the uniform completion of $\nu_A(A)$ in $C(YA)$ is $C(YA)$ by the Stone-Weierstrass theorem since $\nu_A(A)$ is a vector lattice in $C(YA)$ that contains all constant functions and separates the points of $YA$.

\begin{theorem}\label{mu(id)=u}
$\mu(\iden) = u$.
\end{theorem}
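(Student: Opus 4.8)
The plan is to unwind the definitions on both sides and observe they coincide. Recall that $\iden$ is the identity covering operator, so $\iden X = X$ and the covering map $\iden_X \colon X \to X$ is the identity. Feeding this into the definition of $\mu$, we get $\mu(\iden)A = C(\iden\, YA) = C(YA)$, and $\mu(\iden)_A = \phi_{A,\iden} \circ \nu_A$. Since $\iden_{YA} \colon YA \to YA$ is the identity homeomorphism, the induced map $\phi_{A,\iden} \colon C(YA) \to C(YA)$ is $f \mapsto f \circ \iden_{YA} = f$, i.e. $\phi_{A,\iden}$ is the identity on $C(YA)$. Hence $\mu(\iden)_A = \nu_A$.

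Next I would recall that, by definition, $u$ is precisely the hull operator with $uA = C(YA)$ and $u_A = \nu_A$. Comparing, we see $\mu(\iden)A = C(YA) = uA$ and $\mu(\iden)_A = \nu_A = u_A$ for every $A \in \Ws$, so $\mu(\iden) = u$ as operators. The only genuinely substantive point — and this is where I would be slightly careful rather than merely bookkeeping — is to confirm that $u$ as just described really is a hull operator in the sense of Section \ref{prelim2}, i.e. that $\nu_A$ is an essential embedding and that $C(YA) \in \fix(u)$ with $C(YA)$ minimal among fixed objects above $A$. The essentiality of $\nu_A$ follows from Theorem \ref{yosida} (it is the $c = \iden$ case of the argument already used to show $\mu(c) \in \hoWs$), and that $C(YA)$ lies in $\fix(u)$ is the Stone--Weierstrass remark preceding the statement: $\nu_A(A)$ is a point-separating sublattice of $C(YA)$ containing the constants, so its uniform closure is all of $C(YA)$, whence $Y C(YA) = YA$ and $\nu_{C(YA)}$ is an isomorphism.

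I do not anticipate a real obstacle here; the statement is essentially a reconciliation of notation, and the one-line computation $\phi_{A,\iden} = \iden_{C(YA)}$ is the crux. If anything, the mild subtlety is purely expository: making sure the reader sees that ``$\mu(\iden) = u$'' is an equality of hull operators (same underlying object map \emph{and} same embedding), not merely of the object maps, which is immediate once both embeddings are identified with $\nu_A$.

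\begin{proof}
Since $\iden$ is the identity covering operator, $\iden\, YA = YA$ and $\iden_{YA} = \iden_{YA}$ is the identity homeomorphism. Hence the induced essential $\Ws$-morphism $\phi_{A,\iden} \colon C(YA) \to C(\iden\, YA) = C(YA)$ is given by $\phi_{A,\iden}(f) = f \circ \iden_{YA} = f$, i.e.\ $\phi_{A,\iden} = \iden_{C(YA)}$. Therefore
\[
\mu(\iden)A = C(\iden\, YA) = C(YA) = uA
\quad\text{and}\quad
\mu(\iden)_A = \phi_{A,\iden} \circ \nu_A = \nu_A = u_A
\]
for every $A \in \Ws$. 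Thus $\mu(\iden) = u$.
\end{proof}
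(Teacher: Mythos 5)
Your proposal is correct and matches the paper's proof, which is exactly the same unwinding: $\mu(\iden)A = C(\iden YA) = C(YA)$ and $\mu(\iden)_A = \phi_{A,\iden} \circ \nu_A = \nu_A$, compared against the definition of $u$. The extra remarks confirming that $u$ is indeed a hull operator are fine but not needed beyond what the paper already records in the Stone--Weierstrass discussion preceding the statement.
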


\begin{proof}
By definition $\mu(\iden)A =C(\iden YA) = C(YA)$ and $\mu(\iden)_A = \phi_{A,\iden} \circ \nu_A = \nu_A$.
\end{proof}


\section{Some answers to question \ref{Q1}}\label{Q1answers}

In this section we look at $\mS(K,c)$ for the following cases: $c = \iden$ (the identity covering operator), $c=a(\gamma)$ (an atom in $\coC$), $c=g$ (the Gleason covering operator), and $c=qF$ (the quasi-$F$ covering operator).

First, we get a triviality out of the way:

\begin{theorem}
If $\iden \in  \coC$ is the identity operator, then $\mS(K,\iden) = \{ K \}$ for any $K \in \Comp$, so $K$ is the minimum.
\end{theorem}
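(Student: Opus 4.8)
The statement to prove is that $\mS(K,\iden) = \{K\}$ for every $K \in \Comp$, where $\iden$ is the identity covering operator, so that $K$ is trivially the minimum of $\mS(K,\iden)$.

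The plan is to unwind the definitions. First I would recall that the identity covering operator $\iden$ assigns to each $X \in \Comp$ the cover $(\iden X, \iden_X) = (X, \iden_X)$, where $\iden_X \colon X \to X$ is the identity map (a perfect irreducible continuous surjection, hence a legitimate covering map, and indeed $\iden$ is the bottom element of $\coC$). Consequently, for any $K \in \Comp$ we have $\iden K = K$, so the hypothesis ``$K = \iden K$'' in Definition \ref{S(K,c)defn} is automatically satisfied for every $K \in \Comp$, and $\mS(K,\iden) = \{X \in \Comp : \iden X = K\} = \{X \in \Comp : X = K\}$.

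The only thing requiring a word of care is what ``$X = K$'' means here: by the conventions set up in Section \ref{prelim1}, objects of $\mS(K,c)$ are really equivalence classes of covers of $K$ (with $X$ identified with the pair $(X, c_X)$), so $\iden X = K$ should be read as ``$(X, \iden_X)$ is equivalent, as a cover of $K$, to $(K, \iden_K)$.'' Since $\iden_X$ is already a homeomorphism $X \to K$ exactly when $X$ is homeomorphic to $K$, and equivalence in $\cov K$ is precisely homeomorphism over $K$, this collapses $\mS(K,\iden)$ to the single class represented by $K$ itself. Thus $\mS(K,\iden) = \{K\}$, and a one-element poset trivially has $K$ as its minimum.

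There is essentially no obstacle: the entire content is bookkeeping with the definition of the identity operator and the equivalence relation on $\cov K$. If anything, the only ``subtlety'' worth flagging explicitly is that $\iden$ really does lie in $\coC$ — i.e., that for each $X$, $(X, \iden_X)$ is the minimum of $\{(Y,f) \in \cov X : Y \in \fix(\iden)\}$ — but this is immediate since $\fix(\iden) = \Comp$ and $(X,\iden_X)$ is the bottom of $\cov X$. I would state the proof in two or three sentences and move on.
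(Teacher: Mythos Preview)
Your proposal is correct; the paper in fact gives no proof at all, labeling the statement a triviality and moving on, so your unwinding of the definitions is exactly the intended (omitted) argument. Your remark that the only content is bookkeeping with the identity operator and the equivalence relation on $\cov K$ matches the paper's treatment.
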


In \cite{HW21}, it is shown that the lattice $\coC$ has atoms, the description of which we briefly recall now. Suppose $E \in \Comp$ is extremally disconnected and $p \neq q$ are non-isolated points in $E$. Let $E_{pq}$ be the quotient of $E$ that identifies just $p$ and $q$, and let $E_{pq} \xleftarrow{\gamma} E$ be the associated quotient map. Changing the notation, let $E_{\gamma}$ denote $E_{pq}- \{\gamma(p) \}$. Then $E_{pq} = \dot{E}_{\gamma}$ is the one-point compactification of $E_{\gamma}$. Thus, one has the cover $\dot{E}_{\gamma} \xleftarrow{\gamma} E$. The atom $a(\gamma) \in \coC$ determined by $\gamma$ is then given by
\[
(a(\gamma)Y, a(\gamma)_Y) = \left \{
\begin{array}{ll}
(E, \gamma) & \mbox{ if } Y = \dot{E}_{\gamma} \\
(Y, \iden_Y) & \mbox{ if } Y \neq \dot{E}_{\gamma}.
\end{array}
\right.
\] 
(Here $Y=\dot{E}_{\gamma}$ means those spaces are homeomorphic, and $\iden_Y$ is the identity function on the space $Y$.) In \cite[Theorem 2.2]{HW21}, it is shown that $a(\gamma)$ is a an atom of $\coC$, and that every atom in $\coC$ is of that form. Note that the class of spaces fixed by $a(\gamma)$ consists of all spaces not homeomorphic to $\dot{E}_{\gamma}$. We will refer to $a(\gamma)$ as an atom of $\coC$ associated with $E$.

\begin{theorem}\label{a(gamma)}
\begin{itemize}
\item[(i)] $\mS(E,a(\gamma)) = \{ \dot{E}_{\gamma}, E \}$, which has minimum $\dot{E}_{\gamma}$.
\item[(ii)] If $Y \neq E, \dot{E}_{\gamma}$, then $\mS(Y,a(\gamma)) = \{ Y \}$, which has minimum $Y$.
\end{itemize}
\end{theorem}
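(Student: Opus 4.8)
The plan is to unwind the definition of the atom $a(\gamma)$ and of $\mS(Y,a(\gamma))$ separately in the two cases, using the explicit formula displayed just before the statement. Recall that $a(\gamma)$ fixes every space \emph{except} those homeomorphic to $\dot{E}_\gamma$, on which it returns $(E,\gamma)$.

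For part (i), I want to show $\mS(E,a(\gamma)) = \{ \dot{E}_\gamma, E\}$. The inclusion $\supseteq$ is immediate: $a(\gamma)\dot{E}_\gamma = E$ by the top line of the displayed formula, and $a(\gamma)E = E$ since $E$ is extremally disconnected, hence fixed by $a(\gamma)$ (it is not homeomorphic to $\dot{E}_\gamma$, because $\dot{E}_\gamma$ has a non-isolated point that is not a ``simple'' point — more carefully, $E$ and $\dot{E}_\gamma$ are not homeomorphic since $\dot{E}_\gamma$ is not extremally disconnected; indeed identifying two non-isolated points destroys extremal disconnectedness, and by Theorem \ref{Gleason} $\cov \dot{E}_\gamma \ne \{\dot{E}_\gamma\}$ whereas $\cov E = \{E\}$). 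For the reverse inclusion $\subseteq$: if $X \in \mS(E, a(\gamma))$ then $a(\gamma)X = E$. Either $X$ is homeomorphic to $\dot{E}_\gamma$, in which case $a(\gamma)X = E$ automatically and $X = \dot{E}_\gamma$; or $X$ is not homeomorphic to $\dot{E}_\gamma$, in which case $a(\gamma)X = X$, so $X = E$. That exhausts the cases, giving $\mS(E,a(\gamma)) \subseteq \{\dot{E}_\gamma, E\}$. Finally, $\dot{E}_\gamma \le E$ in the order on spaces covered by $E$ because the covering map $\dot{E}_\gamma \xleftarrow{\gamma} E$ exhibits $E$ as a cover of $\dot{E}_\gamma$ (here using that $\leq$ in $\mS(E,a(\gamma))$ is the order ``$X_1 \le X_2$ iff $X_2$ covers $X_1$'' from the discussion before \ref{S(K,c)defn}), so $\dot{E}_\gamma$ is the minimum; and one should note $\dot{E}_\gamma \neq E$ so the set genuinely has two elements with a strict minimum.

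For part (ii), suppose $Y$ is not homeomorphic to $E$ or to $\dot{E}_\gamma$. If $X \in \mS(Y, a(\gamma))$, then $a(\gamma)X = Y$. If $X$ were homeomorphic to $\dot{E}_\gamma$, then $a(\gamma)X = E \cong Y$, contradicting $Y \not\cong E$. Hence $X$ is fixed by $a(\gamma)$, so $X = a(\gamma)X = Y$. Thus $\mS(Y, a(\gamma)) = \{Y\}$, trivially with minimum $Y$.

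The only real subtlety — and the step I would be most careful about — is the case analysis justifying ``$X$ not homeomorphic to $\dot E_\gamma$ implies $a(\gamma)X = X$,'' together with checking that $E \not\cong \dot E_\gamma$ (so that in part (i) we really are excluding $E$ from the ``$Y = \dot E_\gamma$'' branch) and that $Y \not\cong E$ is what rules out the exceptional branch in part (ii). These are all immediate from the description of $a(\gamma)$ and from extremal disconnectedness of $E$ versus non-extremal-disconnectedness of $\dot E_\gamma$ (Theorem \ref{Gleason}); there is essentially no computation, just bookkeeping of the two-branch definition of the atom.
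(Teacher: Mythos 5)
Your proof is correct and is essentially the paper's argument: the paper simply declares both parts immediate from the two-branch description of $a(\gamma)$, and your case analysis is exactly the bookkeeping that makes it immediate (including the check that $E \not\cong \dot{E}_{\gamma}$ via extremal disconnectedness, and that $\gamma$ itself, which satisfies $\gamma = \gamma \circ \iden_E$, witnesses $\dot{E}_{\gamma} \leq E$ in $\mS(E,a(\gamma))$). No gaps.
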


\begin{proof}
These follow immediately from the above description of $a(\gamma)$.
\end{proof}

Next, let $g \in \coC$ be the Gleason cover, so that for $K \in \Comp$ one has $gK = K$ if and only if $K$ is extremally disconnected. Of course, if $K$ is finite, then $K$ is discrete, $gK=K$, and $\mS(K,g) = \{ K \}$ has minimum $K$. So we turn to the case when $K$ is infinite. 

If $D$ is an infinite discrete space, then we write $\alpha D$ for the one-point compactification of $D$.

\begin{lemma}\label{Dlemma}
\begin{itemize}
\item[(i)] If $D$ is an infinite discrete space, and if $X \xleftarrow{\tau} \alpha D$ is a cover, then $\tau$ is a homeomorphism.
\item[(ii)] If $D$ is an infinite discrete space, then the covers of $\alpha D$ are exactly the compactifications of $D$.
\item[(iii)] If $K \in \Comp$ is infinite, then $\{ Z \in \Comp : K \in \cov Z \}$ has a minimum if and only if $K$ is a compactification of some discrete space $D$, and in that case the minimum is $\alpha D$.
\end{itemize}
\end{lemma}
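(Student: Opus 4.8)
The plan is to prove the three parts in order, using the fact (Theorem~\ref{Gleason}) that every compact Hausdorff space has a Gleason cover and that extremally disconnected spaces are exactly the projective objects in $\Comp$.

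For part~(i), suppose $X \xleftarrow{\tau} \alpha D$ is a cover, so $\tau$ is a perfect irreducible continuous surjection onto some $X \in \Comp$. Since $\tau$ is continuous and surjective, $X = \tau(\alpha D)$; and $\tau$ restricted to the dense discrete set $D$ is an injection onto its image (for if $\tau(d_1) = \tau(d_2)$ with $d_1 \ne d_2$, then removing one of the points $d_i$ gives a proper closed set whose image is still all of $X$, contradicting irreducibility). Here I would note that each point of $D$ is isolated in $\alpha D$, hence isolated in $X$ because a covering map is closed and finite-to-one on isolated points; actually the cleanest route is: irreducibility forces $\tau$ to be one-to-one on the set of isolated points, and a perfect irreducible map that is injective on a dense subset is a homeomorphism (a closed continuous bijection on a dense subset of a compact space extends to a homeomorphism of the compactifications, and both $\alpha D$ and $X$ are the unique one-point-type compactification once we know the non-isolated part is a single point mapped homeomorphically). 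I would spell this out: the preimage $\tau^{-1}(\{\text{non-isolated points of }X\})$ is a closed set disjoint from $D$, hence contained in $\{\infty\}$, so it is either empty or $\{\infty\}$; surjectivity and $X$ infinite force it to be $\{\infty\}$, and then $\tau$ is a continuous bijection from the compact space $\alpha D$ onto $X$, hence a homeomorphism.

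For part~(ii), one direction is immediate: if $cD$ is any compactification of $D$, then the inclusion-induced map $cD \xleftarrow{\rho} \alpha D$ (the unique continuous surjection fixing $D$, which exists because $\alpha D$ is the smallest compactification) is perfect (being a map of compacta) and irreducible (any closed set missing a point of $D$ has image missing that point, since $D$ is dense in $cD$ and the points of $D$ are isolated), so $(cD,\rho) \in \cov(\alpha D)$. Conversely, if $(Y, f) \in \cov(\alpha D)$, then $Y \in \Comp$ and $f$ is irreducible, so $f$ is injective on the isolated points $D \subseteq \alpha D$; writing $D' = f^{-1}(D)$, the map $f$ restricts to a bijection $D' \to D$ of discrete spaces, and $D'$ is dense in $Y$ (its closure is a closed set mapping onto $\overline{D} = \alpha D$, so by irreducibility equals $Y$). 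Hence $Y$ is a compactification of $D' \cong D$, and under this identification $f$ is the canonical surjection onto $\alpha D$.

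For part~(iii), combine the two observations that $\{Z : K \in \cov Z\}$ is a complete upper semilattice (by the cited \cite[Theorem 8.4(f)]{PW88}) with the explicit structure above. If $K$ is a compactification of a discrete $D$, then by~(i) every cover of $\alpha D$ is trivial, so $\alpha D$ lies below every member of $\{Z : K \in \cov Z\}$ that it can be compared to; more precisely, for any $Z$ with $K \xleftarrow{f} Z$ a cover, one shows $\alpha D \leq Z$ by producing a cover $\alpha D \xleftarrow{h} Z$: the set $f^{-1}(D)$ is dense in $Z$ (irreducibility again) and $f$ is injective on it, so composing $Z \to \alpha(f^{-1}(D)) \cong \alpha D$ works, and this composite is a covering map. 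Thus $\alpha D$ is the minimum. Conversely, if the minimum $M$ exists, then since $K \in \cov M$ and also $K \in \cov K$ (the identity cover), we get $M \leq K$; but also for the Gleason situation $M \leq gK$ via $K$'s covers, and the minimality must reconcile all covers of $K$ — in particular I would argue that the minimum $M$, being covered by no proper cover in a way compatible with everything, must satisfy $\cov(M)$-minimality forcing $M$ to have only trivial covers of the relevant kind, and then invoke~(i)/(ii) in reverse: a space $M$ such that $M \leq Z$ for all $Z$ with $K \in \cov Z$ must itself have $K$ as its "most collapsed" cover, and running the argument shows $M$ has a dense set of isolated points whose one-point compactification is $M$, i.e., $M = \alpha D$ and $K$ is a compactification of $D$. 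The main obstacle is this converse direction: turning "the minimum exists" into "$K$ is a compactification of a discrete space" requires carefully extracting, from minimality in the upper semilattice, that the minimum space has the rigidity property of part~(i); I expect to do this by taking $M$ and forming a suitable cover of it built from $K$, then using that $M$ is below every such $Z$ — in particular below any cover of $M$ itself pulled back along $f$ — to deduce $\cov(M)$ has no nontrivial members lying over $K$, which by a point-set argument pins down $M$ as $\alpha D$.
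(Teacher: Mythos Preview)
Your arguments for (i) and the forward direction of (iii) are workable, but there are two real problems.

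First, you repeatedly reverse the direction of the covering arrows. In (ii) you write ``$cD \xleftarrow{\rho} \alpha D$'' and call this the map that ``exists because $\alpha D$ is the smallest compactification''; that reasoning produces a map $cD \to \alpha D$, not $\alpha D \to cD$. Likewise in (iii) you write ``for any $Z$ with $K \xleftarrow{f} Z$ a cover,'' but the set in question is $\{Z : K \in \cov Z\}$, meaning $Z \xleftarrow{f} K$. Your subsequent manipulations with $f^{-1}(D)$ are then carried out with the wrong orientation, so the construction of the comparison map $Z \to \alpha D$ does not parse as written.

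Second, and more seriously, the converse of (iii) is not proved. You propose to assume a minimum $M$ exists and extract from minimality that $M$ has ``only trivial covers of the relevant kind,'' eventually forcing $M = \alpha D$; but you never say how to do this, and the sketch (``I expect to do this by taking $M$ and forming a suitable cover\ldots'') is just a plan. The paper's argument is different and direct: it shows that if $K$ is \emph{not} a compactification of a discrete space, then \emph{no} $Z$ can be minimal. The key observation is that a covering map restricts to a bijection between isolated points, so if the isolated points of $K$ are not dense, the same is true of any $Z$ with $Z \xleftarrow{} K$. One then picks a nonempty open $U \subseteq Z$ containing no isolated points, chooses $p \neq q$ in $U$, and forms the quotient $Z_{pq}$ identifying just $p$ and $q$. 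The quotient map $Z_{pq} \leftarrow Z$ is a cover (Hausdorffness uses that $p,q$ are not isolated), it is not a homeomorphism, and composing with $Z \leftarrow K$ shows $K \in \cov Z_{pq}$ with $Z_{pq} < Z$. This concrete ``collapse two non-isolated points'' step is the missing idea in your proposal.
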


\begin{proof}
One can check that, for $X,Y \in \Comp$, if $X \xleftarrow{\tau} Y$ is any cover, then $\tau(p)$ is isolated in $X$ for any isolated $p \in Y$ and $\tau^{-1}(q) = \{ r \}$ for some isolated $r \in Y$ whenever $q \in X$ is isolated. Thus $\tau$ restricts to a homeomorphism between the isolated points of $Y$ and the isolated points of $X$. Statements (i) and (ii) follow easily from this observation. 

If $K$ is a compactification of the discrete space $D$, then $D$ is infinite because $K$ is infinite. Using the observation in the previous paragraph again, one may show that the partially ordered set of compactifications of $D$ (see, e.g., \cite[Section 3.5]{E89}) is order-isomorphic to the partially ordered set of covers of $\alpha D$. The sufficiency in statement (iii) follows. 

Finally, suppose $K \in \Comp$ is not the compactifaction of any discrete space $D$. Let $Z \xleftarrow{f} K$ be a cover. We show that $Z$ is not the minimum. Since $f$ is a cover, and since the set of isolated points of $K$ is not dense, the set of isolated points of $Z$ is not dense. So there is a non-empty open set $U$ in $Z$ containing no isolated points. Choose points $p \neq q$ in $U$, and let $Z_{pq}$ be the quotient of $Z$ obtained by identifying just $p$ and $q$. Then $Z_{pq} \in \Comp$, the quotient map $Z_{pq} \leftarrow Z$ is a cover and not a homeomorphism, and $K \in \cov Z_{pq}$ (any composition of covers is a cover). Thus $Z$ is not the minimum.
\end{proof}

\begin{remark}
The proof of \ref{Dlemma}(iii) shows that, in addition to having no minimum, $\{ Z \in \mathbf{Comp} : K \in \cov Z \}$ has no minimal elements when $K$ is not a compactification of some discrete space.
\end{remark}

If $X$ is any Tychonoff space, then $\beta X$ denotes the \v{C}ech-Stone compactification of $X$ (see, e.g.,  \cite[Section 3.6]{E89} or \cite[Chapter 6]{GJ60}). 

\begin{theorem}\label{c=g}
If $K \in \Comp$ is infinite with $gK=K$, then $\mS(K,g)$ has the minimum if and only if $K = \beta D$ for some discrete space $D$, and in that case the minimum is $\alpha D$. 
\end{theorem}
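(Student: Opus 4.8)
The statement combines a characterization of when $\mS(K,g)$ has a minimum with an identification of that minimum. The plan is to reduce to Lemma \ref{Dlemma}(iii), which already describes when the larger set $\{ Z \in \Comp : K \in \cov Z \}$ has a minimum, namely when $K$ is a compactification of some discrete space $D$, with minimum $\alpha D$. So the first step is to relate $\mS(K,g)$ to $\{ Z : K \in \cov Z \}$. Since $K$ is extremally disconnected (because $gK = K$) and infinite, Theorem \ref{Gleason} says $K$ is its own Gleason cover; for any $X \in \mS(K,g)$ we have $gX = K$, so in particular $K \in \cov X$ and $X \in \{ Z : K \in \cov Z \}$. Conversely, one must check which $Z$ with $K \in \cov Z$ actually satisfy $gZ = K$: this holds precisely when the given cover $Z \xleftarrow{} K$ is the \emph{maximum} cover of $Z$, i.e. $(K, \cdot) = (gZ, g_Z)$ in $\cov Z$. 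So $\mS(K,g)$ is the subset of $\{ Z : K \in \cov Z \}$ consisting of those $Z$ for which $K$ is the Gleason cover, not merely some cover.

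The forward direction is the technical heart. Suppose $\mS(K,g)$ has a minimum, call it $X_0$. I would first argue that $X_0$ is also the minimum of the a priori larger set $\{ Z : K \in \cov Z \}$. Given any $Z$ with $K \in \cov Z$, one has $gZ$ with $K = ggZ \in \cov gZ$ wait — more carefully: $K \in \cov Z$ gives $gK \geq gZ$ in the appropriate sense; since $gK = K$, we get a cover $K \to gZ$, and $gZ$ itself lies in $\mS(gZ', g)$ appropriately. The clean way: for $Z$ with $K \in \cov Z$, the Gleason cover $gZ$ satisfies $g(gZ) = gZ$ and, since $K$ is an extremally disconnected cover of $Z$ while $gZ$ is the maximum cover, there is a cover $K \to gZ$; but $K$ is extremally disconnected, so by Theorem \ref{Gleason} (applied to $gZ$, which being a Gleason cover has $\cov gZ = \{gZ\}$... no, $gZ$ need not be extremally disconnected only if... actually $gZ$ \emph{is} extremally disconnected) the cover $K \to gZ$ is a homeomorphism, whence $gZ \cong K$ and $Z \in \mS(K,g)$. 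Therefore $\mS(K,g) = \{ Z : K \in \cov Z \}$ exactly — every $Z$ covered by $K$ has Gleason cover $K$. This is the key simplification: the two sets coincide. Then by Lemma \ref{Dlemma}(iii), $\mS(K,g)$ has a minimum iff $K$ is a compactification of some discrete space $D$, with minimum $\alpha D$.

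It remains to upgrade ``$K$ is a compactification of a discrete space $D$'' to ``$K = \beta D$''. Here one uses that $K$ is extremally disconnected: a compactification of a discrete space $D$ is extremally disconnected if and only if it is $\beta D$. Indeed $D$ is open dense in $K$, and in an extremally disconnected space the closure of an open set is open; a routine argument (or an appeal to the standard fact that $\beta D$ is the unique extremally disconnected compactification of a discrete $D$, since $\beta D$ is the Stone space of $\Pow(D)$ and extremal disconnectedness forces the embedding $D \hookrightarrow K$ to be $C^*$-embedded) then gives $K \cong \beta D$. Conversely $\beta D$ is extremally disconnected and is a compactification of $D$, so when $K = \beta D$ Lemma \ref{Dlemma}(iii) applies and the minimum of $\mS(K,g)$ is $\alpha D$.

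The main obstacle I anticipate is the bookkeeping in the forward direction — verifying cleanly that every $Z$ with $K \in \cov Z$ satisfies $gZ \cong K$, so that $\mS(K,g)$ and $\{ Z : K \in \cov Z \}$ genuinely coincide. This rests on the facts that a cover of $Z$ by an extremally disconnected space must be the Gleason cover (uniqueness of the projective cover) and that $K$ being extremally disconnected forbids any proper cover of $K$. Once that identification is in hand, the theorem follows from Lemma \ref{Dlemma}(iii) together with the standard characterization of $\beta D$ among compactifications of a discrete space.
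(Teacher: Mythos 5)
Your proposal is correct and follows essentially the same route as the paper: identify $\mS(K,g)$ with $\{Z \in \Comp : K \in \cov Z\}$ (using that an extremally disconnected space admits no proper covers, so the comparison map between $K$ and $gZ$ is a homeomorphism), apply Lemma \ref{Dlemma}(iii), and then upgrade ``compactification of $D$'' to $\beta D$ via the fact that the dense discrete subspace $D$ is $C^*$-embedded in the extremally disconnected $K$. Aside from some wavering about the direction of the cover map between $gZ$ and $K$ (the maximality of the Gleason cover gives a cover $K \xleftarrow{} gZ$, which is a homeomorphism since $\cov K = \{K\}$), the argument matches the paper's.
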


\begin{proof}
Suppose $K \in \Comp$ is infinite with $gK = K$. Then $\mS(K,g)=\{ Z \in \Comp : Z \in \cov K \}$, so $\mS(K,g)$ has the minimum if and only if $K$ is a compactification of an infinite discrete space $D$ (recall that $K$ is infinite by hypothesis) and the minimum is $\alpha D$ by \ref{Dlemma}. Since $K$ is extremally disconnected, $D$ is $C^*$-embedded in $K$ (\cite[Problem 1H.6]{GJ60}). Thus $K = \beta D$.
\end{proof}

Finally, we consider the case when the covering operator in question is $c=qF$, the quasi-$F$ covering operator. The covering class associated with this operator is the class of quasi-$F$ spaces, which are the Tychonoff spaces $X$ such that every dense cozero-set of $X$ is $C^*$-emebedded (every bounded real-valued continuous function on a dense cozero-set extends continuously over $X$). See \cite{DHH80} or \cite{CH11} for more about $qF$ and the quasi-$F$ spaces.  

Recall that a space $X$ is called \emph{almost-$P$} if $X$ has no proper dense cozero-set (see \cite{L77}). Observe that almost-$P$ implies quasi-$F$.

\begin{theorem}\label{c=qF}
\begin{itemize}
\item[(i)] $\mS(\bN,qF)$ has minimum $\aN$.
\item[(ii)] If $K \in \Comp$ is almost-$P$, then $\mS(K,qF) = \{ K \}$ has minimum $K$. 
\end{itemize}
\end{theorem}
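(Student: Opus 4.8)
The plan is to handle the two parts separately, with part (i) being the substantive one and part (ii) being a short observation. For part (ii): if $K$ is almost-$P$, then $K$ is quasi-$F$, so $qFK = K$ and hence $K \in \mS(K,qF)$. To see that $\mS(K,qF) = \{K\}$, suppose $Z \in \mS(K,qF)$, so there is a cover $K \xleftarrow{qF_Z} qFZ$ with $qFZ$ homeomorphic to $K$. I would argue that a covering map onto an almost-$P$ space must be a homeomorphism: a cover $K \xleftarrow{f} Z$ is perfect and irreducible, and irreducibility means the image of any proper closed set is proper, equivalently (passing to complements) every nonempty open $U \subseteq Z$ contains a point whose fiber is a singleton lying in no other fiber's closure; combined with $K$ having no proper dense cozero-set, one shows $f$ cannot collapse anything. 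More directly: if $Z \xleftarrow{} qFZ \cong K$ then $Z$ is a continuous perfect irreducible image of (a copy of) $K$, and since $K$ is almost-$P$ — hence has the property that it is the \emph{minimum} of its cover class, or at least admits no nontrivial cover structure below — one concludes $Z \cong K$. I would look up or reprove the small lemma that an irreducible perfect preimage of an almost-$P$ space is the space itself, or that almost-$P$ spaces are minimal in $\{Z : K \in \cov Z\}$; this is the only place part (ii) needs care.

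For part (i), I must show $\mS(\bN, qF)$ has minimum $\aN$. First one needs $qF(\aN) = \bN$: since $\aN$ is a compactification of the discrete space $\N$ and the quasi-$F$ cover is obtained by a known construction (the minimal quasi-$F$ cover is the space of the $\sigma$-complete Boolean algebra generated by the cozero-sets, or in this locally compact $\sigma$-compact setting $qF(\aN) = \beta\N$ because $\N$ is a dense cozero-set of $\aN$ that fails to be $C^*$-embedded, and the $qF$-cover must make it $C^*$-embedded). I would cite \cite{DHH80} or \cite{CH11} for $qF(\aN) = \bN$, noting that $\N$ is the unique (up to the point at infinity) dense cozero proper subset and its $C^*$-embedding forces passage to $\bN$; also $qF(\bN) = \bN$ since $\bN$ is extremally disconnected hence quasi-$F$. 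So $\aN \in \mS(\bN, qF)$.

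The real content is minimality: for any $Z \in \mS(\bN,qF)$ I must produce a cover $\aN \xleftarrow{} Z$. Here I would invoke \ref{Dlemma}: $\bN$ is infinite and $\bN = \beta\N$, a compactification of the discrete space $\N$, so by \ref{Dlemma}(iii) the poset $\{Z \in \Comp : \bN \in \cov Z\}$ has minimum $\aN$. Now every $Z \in \mS(\bN,qF)$ satisfies $\bN = qFZ \in \cov Z$ (the $qF$-cover map is, by definition, a covering map), so $Z$ belongs to that poset and therefore lies above $\aN$. It remains to check that the order $\mS(\bN,qF)$ inherits (from identifying $Z$ with $(Z, qF_Z)$) agrees, on this set, with the order of \ref{Dlemma}'s poset: $(Z_1, qF_{Z_1}) \leq (Z_2, qF_{Z_2})$ in $\mS$ means there is a cover $Z_1 \xleftarrow{} Z_2$ compatible with the $qF$-maps, which is exactly the order $\{Z : \bN \in \cov Z\}$ carries — one must observe that such a connecting cover, if it exists at the level of spaces-over-$\bN$, is automatically compatible with the canonical $qF$-maps by uniqueness of covering maps lying over a common space. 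So $\aN$, being the minimum of the larger poset and an element of $\mS(\bN,qF)$, is the minimum of $\mS(\bN,qF)$.

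The step I expect to be the main obstacle is confirming $qF(\aN) = \bN$ cleanly — i.e., that the quasi-$F$ cover of the one-point compactification of $\N$ is exactly $\beta\N$ — and making sure the order-compatibility between $\mS(\bN,qF)$ and the poset of \ref{Dlemma}(iii) is genuinely an equality of orders and not merely a containment, since the definition of the order on $\mS(K,c)$ carries the extra datum of the $c$-maps. Both should go through, but they are where I would spend the care; the rest follows formally from \ref{Dlemma}.
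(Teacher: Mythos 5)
Your part (i) is essentially the paper's argument: reduce to Lemma \ref{Dlemma}(iii) via the inclusion $\mS(\bN,qF)\subseteq\{Z\in\Comp:\bN\in\cov Z\}$ and then check $qF\aN=\bN$. Your sketch of that computation can be made precise (every cover of $\aN$ is a compactification of $\N$ by \ref{Dlemma}(ii); $\N$ is a dense cozero-set of any such compactification, being open and $\sigma$-compact; quasi-$F$ forces $\N$ to be $C^*$-embedded, so the only quasi-$F$ cover is $\bN$, which is indeed quasi-$F$), whereas the paper instead observes that every dense open subset of $\aN$ contains a dense cozero-set and quotes \cite[Proposition 4.6]{DHH80} to get that $qF\aN$ is extremally disconnected. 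Your order-compatibility worry is harmless: by definition the order on $\mS(K,c)$ is the one inherited from the poset of spaces covered by $K$ under the identification $X\leftrightarrow(X,c_X)$, and any two covers $\bN\to\aN$ represent the same element of that poset, so the minimum supplied by \ref{Dlemma}(iii) is exactly $(\aN,qF_{\aN})$.

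Part (ii), however, rests on a false lemma, and this is a genuine gap. Neither of the statements you propose to ``look up or reprove'' is true. A covering map onto an almost-$P$ space need not be a homeomorphism: for $D$ uncountable discrete, $\alpha D$ is almost-$P$ and $\alpha D \xleftarrow{} \beta D$ is a cover. Nor is an almost-$P$ space minimal in $\{Z\in\Comp:K\in\cov Z\}$: take $K=\alpha D\sqcup\alpha D$ with $D$ uncountable, which is almost-$P$; identifying the two points at infinity gives a perfect irreducible surjection of $K$ onto a space not homeomorphic to $K$ (irreducibility: a closed set whose image is everything must contain every isolated point, and the isolated points are dense in $K$). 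So one cannot conclude $Z=K$ merely from ``$K$ covers $Z$ and $K$ is almost-$P$''; an almost-$P$ space can have nontrivial cover-images, they simply do not lie in $\mS(K,qF)$. The fact you need goes the other way: if $Z\xleftarrow{f}Y$ is a cover and $Y$ is almost-$P$, then $Z$ is almost-$P$, since for a dense cozero-set $U$ of $Z$ the set $f^{-1}(U)$ is a dense cozero-set of $Y$, hence equals $Y$, hence $U=Z$. Applying this to the cover $Z\xleftarrow{qF_Z}K$ for $Z\in\mS(K,qF)$ shows $Z$ is almost-$P$, hence quasi-$F$, so $qF_Z$ is a homeomorphism and $Z=K$. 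Note that this argument uses in an essential way that the given cover is the $qF$-cover, i.e., membership in $\mS(K,qF)$ --- precisely the datum your proposed lemmas discard.
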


\begin{proof}
Since $\mathbb{N}$ is infinite, discrete, and extremally disconnected, we know from \ref{Dlemma} that $\{ Z \in \Comp : \bN \in \cov Z \}$ coincides with the set compactifications of $\mathbb{N}$ and has minimum $\aN$. Since 
\[
\mS(\bN,qF) \subseteq \{ Z \in \Comp : \bN \in \cov Z \},
\]
and since $qF\aN = \bN$ (every dense open set of $\aN$ contains a dense cozero-set of $\aN$, so $qF \aN$ is extremally disconnected by \cite[Proposition 4.6]{DHH80}), statement (i) follows.

Every almost-$P$ space is also quasi-$F$, and if $X \leftarrow Y$ is a cover and $Y$ is almost-$P$, then $X$ is almost-$P$ too.
So if $K \in \Comp$ is almost-$P$, it follows that $\mS(K,qF) = \{ K \}$. Thus (ii) holds.
\end{proof}

\begin{remark}
Every extremally disconnected space is quasi-$F$, and $\mS(\beta D,g)$ has minimum $\alpha D$ by \ref{c=g}. On the other hand, if $D$ is uncountable, then $\mS(\beta D,qF)$ is what? In any event, $\alpha D \notin \mS(\beta D, qF)$ because $\alpha D$ is almost-$P$ (hence quasi-$F$). 
\end{remark}


\section{Boolean Algebras}\label{BAstuff}

We pose and answer the following question about minimum Boolean algberas for the completion operator $\chi$; it's a variant (indeed, a special case) of Question \ref{Q2}.

For $C$ a Boolean algebra with $\chi C = C$, let
\[
\B(C,\chi) = \{ A \in \mathbf{BA} : \chi A = C \},
\]
where $\bf{BA}$ is the category of Boolean algebras and Boolean algebra homomorphisms.
This $\B(C,\chi)$ is partially ordered by $\mathbf{BA}$-inclusion.

\begin{question}\label{QBA}
For what $C \in \mathbf{BA}$ with $\chi C=C$, does $\B(C,\chi)$ have a minimum, and then what is it?
\end{question}

Here is our answer.

\begin{theorem}\label{BAmin}
$\B(C, \chi)$ has the minimum if and only if $C$ is a power set $\Pow(D)$, and then the minimum is the finite/cofinite Boolean algebra on $D$.
\end{theorem}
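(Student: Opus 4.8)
The plan is to translate Theorem \ref{BAmin} into Theorem \ref{c=g} via Stone duality. Stone duality is a contravariant equivalence between $\mathbf{BA}$ and the category $\mathbf{Stone}$ of Stone spaces (zero-dimensional compact Hausdorff spaces), sending a Boolean algebra $A$ to its ultrafilter space $\mathrm{St}(A)$ and an injective homomorphism to a continuous surjection. Under it, \emph{dense} embeddings correspond to irreducible perfect surjections, complete Boolean algebras correspond to extremally disconnected spaces, and hence the completion operator $\chi$ corresponds to the Gleason operator $g$: since $A \hookrightarrow \chi A$ is a dense embedding into a complete algebra, $\mathrm{St}(\chi A)$ is the unique extremally disconnected cover of $\mathrm{St}(A)$, i.e.\ $\mathrm{St}(\chi A) = g\,\mathrm{St}(A)$. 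First I would record these correspondences together with the (easily overlooked) fact that $\mathrm{St}$ is order-preserving here: a subalgebra inclusion $A_1 \subseteq A_2$ inside $C$ dualizes to a covering map $\mathrm{St}(A_2) \to \mathrm{St}(A_1)$, which is exactly the relation $\mathrm{St}(A_1) \leq \mathrm{St}(A_2)$ in $\mS(\mathrm{St}(C),g)$. So $\mathrm{St}$ restricts to an order-isomorphism from $\B(C,\chi)$ onto $\{ Z \in \mathbf{Stone} : \mathrm{St}(C) \in \cov Z \}$, with inverse $Z \mapsto \clop(Z)$; in particular $\B(C,\chi)$ has a minimum iff that poset of Stone spaces does, and minima correspond.

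The step I expect to be the real obstacle is that $\{ Z \in \mathbf{Stone} : \mathrm{St}(C) \in \cov Z \}$ is only the ``Stone part'' of $\mS(\mathrm{St}(C), g)$: a Stone space may cover spaces that are not zero-dimensional (for instance the Gleason cover of $[0,1]$ is a Stone space, yet $[0,1] \in \mS(g[0,1], g)$ is not), so Theorem \ref{c=g} is not immediately applicable. What I would prove is that restricting to Stone spaces changes neither the existence nor the value of a minimum, for two reasons: (a) in the proof of \ref{Dlemma}(iii) the strictly smaller cover $Z_{pq}$ — obtained by gluing two points of a nonempty open subset of $Z$ with no isolated points — is zero-dimensional whenever $Z$ is (the clopen subsets of $Z$ containing both or neither of the two glued points descend to clopen sets of $Z_{pq}$, and these form a base), so the non-minimality witnesses used there stay within $\mathbf{Stone}$; and (b) when $\mS(\mathrm{St}(C),g)$ has a minimum it is $\alpha D$, which is already a Stone space. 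Hence $\{ Z \in \mathbf{Stone} : \mathrm{St}(C) \in \cov Z \}$ and $\mS(\mathrm{St}(C),g)$ have the same minima, and Theorem \ref{c=g} applies (for $C$ infinite): $\B(C,\chi)$ has a minimum iff $\mathrm{St}(C) = \beta D$ for some discrete $D$, and then the minimum corresponds to $\alpha D$. When $C$ is finite this is degenerate — a finite Boolean algebra is complete and is its own unique dense completion, so $\B(C,\chi) = \{C\}$ and $C = \Pow(D)$ for finite $D$ — and the conclusion holds trivially.

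It remains to transport this back through $\clop$: for discrete $D$ one has $\clop(\beta D) = \Pow(D)$ and $\clop(\alpha D) = \{ S \subseteq D : S \text{ or } D \setminus S \text{ is finite} \}$, the finite/cofinite algebra on $D$; and $\mathrm{St}(C) = \beta D$ forces $C = \clop(\mathrm{St}(C)) = \Pow(D)$. Hence $\B(C,\chi)$ has a minimum precisely when $C \cong \Pow(D)$, with minimum the finite/cofinite algebra on $D$. As a check, the ``if'' half is also immediate directly: any dense subalgebra of $\Pow(D)$ contains every singleton (these are the atoms) and therefore every finite and every cofinite set, so the finite/cofinite algebra lies below every member of $\B(\Pow(D),\chi)$ and belongs to it; the ``only if'' half could be done algebraically too, building a proper dense subalgebra of a putative minimum by gluing two ultrafilters on an atomless interval, but routing through \ref{c=g} is shorter.
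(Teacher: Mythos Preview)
Your proposal is correct and follows essentially the same route as the paper: translate via Stone duality to $\mS(E,g)\cap\mathbf{ZD}$ (the paper's Theorem \ref{n.3}), then argue that restricting to zero-dimensional spaces does not affect the minimum because (i) the point-gluing witness $Z_{pq}$ from \ref{Dlemma}(iii) stays zero-dimensional, and (ii) the minimum $\alpha D$ from \ref{c=g} is already zero-dimensional. Your write-up adds a couple of things the paper leaves implicit---the degenerate finite case, the explicit identifications $\clop(\beta D)=\Pow(D)$ and $\clop(\alpha D)=\text{finite/cofinite}$, and the direct algebraic verification of the ``if'' half---but the core argument is the same.
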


We obtain this via Stone Duality, relating the question to the special case of Q\ref{Q1} embodied in \ref{c=g}.

Stone Duality ``is" the pair of functors $\mathbf{BA} \xrightarrow{s} \mathbf{ZD} \xrightarrow{\clop} \mathbf{BA}$, where $\mathbf{ZD}$ is the category of compact zero-dimensional spaces with continuous maps, $s$ assigns a Boolean algebra $A$ to its Stone representation space $sA$, and $\clop X$ is the $\bf{BA}$ of clopen sets in $X$. $\clop \circ s$ is a $\bf{BA}$-isomorphism, and we generally identify $A \in \bf{BA}$ with $\clop sA$. Upon so doing, all $\bf{BA}$-morphisms $\clop X \xrightarrow{\varphi} \clop Y$ are given from a unique continuous $X \xleftarrow{\tau} Y$, as $\varphi(U) = \tau^{-1}(U)$. We denote $\varphi$ as $\widehat{\tau}$. (The similarity with the Yosida representation in \ref{yosida} is evident.) For background on Boolean algebras, see \cite{S69} as needed.

Now, for $A \in \bf{BA}$, $\chi A$ is the maximum $\bf{BA}$-essential extension of $A$: for a $\bf{BA}$-morphism $A \xrightarrow{\varphi} B$ being $\bf{BA}$-essential means $\varphi(A)$ is order-dense in $B$, and $\clop X \xleftarrow{\widehat{\tau}} \clop Y$ is $\bf{ZD}$-essential if and only if $\tau$ is irreducible (originally due to E. Weinberg; cf. the comment about $\Ws$ \ref{yosida}(iv)).

It results that a completion $A \xrightarrow{\chi_A} \chi A$ is exactly $\clop sA \xleftarrow{\widehat{\tau}} \clop gsA$, where $sA \xleftarrow{\tau} gsA$ is the Gleason cover.

\begin{theorem}\label{n.3}
For any complete $C \in \bf{BA}$, $C = \clop E$, where $E$ is compact and extremally disconnected, and we have an order-isomorphism of posets $\B(C,\chi) \to \mS(E,g) \cap \bf{ZD}$ given as: $\chi A = C$, for $A = \clop X$ means just $X \in \bf{ZD}$, and $gX = E$ (see the discussion above)

Thus, a minimum $M$ in $\B(\clop E, \chi)$ corresponds exactly to a minimum in $\mS(E,g) \cap \bf{ZD}$.
\end{theorem}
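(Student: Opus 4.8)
The plan is to verify the three assertions of the statement in turn, the real content lying in the order-isomorphism claim. Write $E := sC$ for the Stone space of $C$, so that $E \in \mathbf{ZD}$ and, under the standing identification of a Boolean algebra with the clopen algebra of its Stone space, $C = \clop E$. For the first assertion---that $E$ is extremally disconnected---I would either quote Stone's theorem that $\clop E$ is a complete Boolean algebra exactly when $E$ is extremally disconnected, or combine Theorem \ref{Gleason} with the fact recalled just before the statement that the completion operator $\chi$ dualizes to the Gleason operator $g$; since $\chi C = C$, the latter gives $gE = E$, whence $E$ is extremally disconnected.

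For the order isomorphism I would proceed in two directions. Given $A \in \B(C,\chi)$, set $X := sA \in \mathbf{ZD}$, so $A = \clop X$. By the fact recalled before the statement, the completion embedding $A \xrightarrow{\chi_A} \chi A$ dualizes to the Gleason cover $X \xleftarrow{g_X} gX$, i.e.\ $\chi A = \clop(gX)$; hence $\chi A = C = \clop E$ forces $\clop(gX) \cong \clop E$, and since $\clop$ and $s$ form a duality, $gX \cong E$, that is, $X \in \mS(E,g) \cap \mathbf{ZD}$. Conversely, if $X \in \mS(E,g) \cap \mathbf{ZD}$, then $A := \clop X$ satisfies $sA \cong X$ and $\chi A = \clop(gX) = \clop E = C$, the embedding $A \to C$ being the dual of the covering map $g_X$, which is the completion embedding; so $A \in \B(C,\chi)$. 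Thus $A \mapsto sA$ and $X \mapsto \clop X$ are mutually inverse bijections $\B(C,\chi) \rightleftarrows \mS(E,g) \cap \mathbf{ZD}$. That they preserve the order in both directions is the translation, under Stone Duality, of the equivalence recalled just before the statement, ``$\mathbf{BA}$-essential embedding $\leftrightarrow$ irreducible surjection ($=$ cover)'': an inclusion $\iota \colon A_1 \hookrightarrow A_2$ of order-dense subalgebras of $C$ is automatically $\mathbf{BA}$-essential and dualizes to a cover $s\iota \colon sA_2 \to sA_1$, while the compatibility $\chi_{A_1} = \chi_{A_2} \circ \iota$ dualizes, by contravariance of $s$, to $g_{sA_1} = s\iota \circ g_{sA_2}$---precisely the relation defining $\leq$ on $\mS(E,g)$ as a sub-poset of the spaces of which $E$ is a cover. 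Applying $\clop$ to a cover together with its compatibility triangle supplies the reverse implication.

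Finally, an isomorphism of posets carries a minimum to a minimum, so $M$ is the minimum of $\B(\clop E,\chi)$ if and only if $sM$ is the minimum of $\mS(E,g) \cap \mathbf{ZD}$, which is the last sentence. I expect the only delicate points to be bookkeeping: keeping the contravariance of $s$ straight so that the compatibility triangles genuinely match up, and---the one real subtlety---remembering that $\mS(E,g)$ may contain compact Hausdorff spaces that are not zero-dimensional, so the correspondence is with $\mS(E,g) \cap \mathbf{ZD}$ and not with all of $\mS(E,g)$. This restriction is exactly what forces the subsequent computation of minimum Boolean algebras (via Theorem \ref{c=g}) to pass to the zero-dimensional part.
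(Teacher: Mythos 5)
Your proof is correct and takes essentially the same route as the paper, which offers no separate argument for this theorem but presents it as a direct consequence of the preceding discussion (Stone duality, $\mathbf{BA}$-essential embeddings dualizing to irreducible surjections, and the completion $\chi_A$ being exactly the dual of the Gleason cover of $sA$) --- precisely the ingredients you use. The only difference is that you make explicit what the paper leaves implicit: the mutually inverse assignments $A \mapsto sA$ and $X \mapsto \clop X$, the order-preservation in both directions via the compatibility triangles, and the observation that one must intersect with $\mathbf{ZD}$ since $\mS(E,g)$ may contain non-zero-dimensional spaces.
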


We recall:

\begin{theorem}\label{c=g2}
(\ref{c=g} above) For $E$ infinite and extremally disconnected, $\mS(E,g)$ has the minimum if and only if $E$ is a $\beta D$, and then the minimum is $\alpha D$.
\end{theorem}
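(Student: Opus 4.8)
The plan is to observe that Theorem \ref{c=g2} is, up to renaming $K$ as $E$, literally Theorem \ref{c=g}, so in the paper it warrants only the one-line justification ``This is Theorem \ref{c=g}.'' For the reader who wants to see the mechanism explicitly (it is about to be reused in the Boolean-algebra setting), I would recall the two ingredients behind \ref{c=g}. First, since $E$ is extremally disconnected, an $X \in \Comp$ satisfies $gX = E$ exactly when $E \in \cov X$: the Gleason cover is the unique extremally disconnected member of $\cov X$, so having $E$ as a cover forces $gX = E$, and conversely. Hence $\mS(E,g) = \{\, Z \in \Comp : E \in \cov Z \,\}$ with the inherited order.

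Second, I would invoke Lemma \ref{Dlemma}(iii): because $E$ is infinite, $\{\, Z \in \Comp : E \in \cov Z \,\}$ has a minimum if and only if $E$ is a compactification of some (necessarily infinite) discrete space $D$, and in that case the minimum is $\alpha D$. To sharpen ``compactification of $D$'' to ``$E = \beta D$'', I would use, exactly as in the proof of \ref{c=g}, that $D$ is $C^*$-embedded in the extremally disconnected space $E$ (\cite[Problem~1H.6]{GJ60}); a compactification of $D$ in which $D$ is $C^*$-embedded is $\beta D$. That closes the argument.

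If instead I were proving the underlying Lemma \ref{Dlemma}(iii) from scratch, the routine half is that the poset of covers of $\alpha D$ is order-isomorphic to the poset of compactifications of $D$, via the observation that a covering map restricts to a homeomorphism between the isolated points of domain and codomain. The part I expect to be the main obstacle is the other half: when $E$ is \emph{not} a compactification of any discrete space, one must exhibit, above an arbitrary $Z$ with $E \in \cov Z$, a strictly smaller such space. The device is that the set of isolated points of $Z$ cannot be dense (it is not dense in $E$, and covering maps preserve density of the isolated-point set), so one picks distinct $p \neq q$ inside a nonempty open set of $Z$ containing no isolated point and passes to the quotient $Z_{pq}$ identifying just $p$ and $q$; the quotient map $Z \to Z_{pq}$ is a cover and not a homeomorphism, while $E \in \cov Z_{pq}$ since a composite of covers is a cover. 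Hence $Z$ is not minimum (in fact not minimal). Getting that quotient construction right — checking that $Z_{pq}$ is still compact Hausdorff and that the map is perfect and irreducible — is the step that needs the most care.
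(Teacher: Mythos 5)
Your proposal is correct and matches the paper: Theorem \ref{c=g2} is stated there only as a recall of Theorem \ref{c=g}, and your expanded justification (identifying $\mS(E,g)$ with $\{Z \in \Comp : E \in \cov Z\}$ via uniqueness of the extremally disconnected cover, invoking Lemma \ref{Dlemma}(iii), and upgrading ``compactification of $D$'' to $\beta D$ by $C^*$-embedding in the extremally disconnected $E$) is exactly the paper's argument for \ref{c=g}, as is your sketch of \ref{Dlemma}(iii).
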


Now we give the proof of \ref{BAmin}.

\begin{proof}
Equivalent to \ref{BAmin} is the statement: If $E$ is extremally disconnected, then $\B(\clop E, \chi)$ has the minimum if and only if $E$ is a $\beta D$, and then the minimum is $\clop \alpha D$. By \ref{n.3}, this last is equivalent to: $\mS(E,g) \cap \bf{ZD}$ has the minimum if and only if $E$ is a $\beta D$, and then the minimum is $\alpha D$. We prove this.

$(\Leftarrow)$. Suppose $E$ is a $\beta D$, with minimum $\alpha D$. Apply \ref{c=g2}; since $\alpha D \in \bf{ZD}$, it is also a minimum in $\mS(E,g) \cap \bf{ZD}$.

$(\Rightarrow)$. Suppose $\mS(E,g) \cap \bf{ZD}$ has the minimum $Z$. It suffices to show $E$ is a $\beta D$. For then by \ref{c=g2} again, $\mS(E,g)$ has the minimum $\alpha D$.  To show $E$ is a $\beta D$, it suffices to show $E$ is a compactification of a discrete space $D$, which then must be $\beta D$ (since $E$ is extremally disconnected, and thus any dense set is $C^*$-embedded). To show that, repeat the argument at the end of the proof of \ref{Dlemma} as: if $E$ is not a compactification of a $D$, then there is clopen $U$ containing non-isolated $p \neq q$, and $Z_{pq} \xleftarrow{\gamma} Z$ which identifies only $p$ and $q$ is a cover, and it's easy to see that $Z_{pq}$ is $\bf{ZD}$ because $Z$ was. That contradicts our assumption that $Z$ was the minimum in $\mS(E,g) \cap \bf{ZD}$.
\end{proof}

\begin{remark}
\ref{BAmin} is distinctly related to \ref{F(aD)} in the next section, via the categorical embedding $\mathbf{BA} \to \Ws$, which sends $\clop X$ to the $\ell$-group in $C(X)$ generated by all the characteristic functions of the $U \in \clop X$. Explaining that seems to require all the above details anyway. 
\end{remark}


\section{Some answers to question \ref{Q2}}\label{Q2answers}

In this section we establish a connection between $\V(H,\mu(c))$ and $\mS(K,c)$, then use it to transfer our results from the previous section.

We first give a result about minimums relative to a fixed Yosida space.
Recall that if $K \in \Comp$, then the set $\{ A \in \Ws : YA = K \}$ may be partially ordered by writing $A \leq B$ if there is an essential vector lattice embedding $A \xrightarrow{f} B$ such that $\nu_A = \nu_B \circ f$ (here $\nu_A, \nu_B$ are the Yosida embeddings of \ref{yosida}(ii)).

\begin{theorem}\label{Ymin}
If $K \in \Comp$, then the poset $\{ A \in \Ws : YA = K \}$ has the minimum if and only if $K$ is zero-dimensional, and in that case the minimum is $F(K) \equiv \{ f \in C(K) : f(K) \mbox{ is finite} \}$.
\end{theorem}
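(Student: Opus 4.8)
The plan is to prove both directions by exploiting the Yosida correspondence between $\Ws$-objects with a fixed Yosida space $K$ and vector sublattices of $C(K)$ that contain the constants and separate points of $K$. By \ref{yosida}(iii), the poset $\{A \in \Ws : YA = K\}$ is order-isomorphic to the poset of such sublattices of $C(K)$ ordered by inclusion, where $A \leq B$ corresponds to $\nu_A(A) \subseteq \nu_B(B)$. So the question becomes: when does the collection of point-separating, constant-containing vector sublattices of $C(K)$ have a smallest member?

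\textbf{The easy direction ($K$ zero-dimensional $\Rightarrow$ minimum is $F(K)$).} First I would check that $F(K)$ is a vector sublattice of $C(K)$: sums, scalar multiples, and $|\cdot|$ of finite-valued continuous functions are again finite-valued and continuous. It contains the constants. When $K$ is zero-dimensional (and compact Hausdorff), the characteristic functions of clopen sets lie in $F(K)$ and separate points of $K$ (given $x \neq y$, take a clopen $U$ with $x \in U$, $y \notin U$). Hence $F(K) \in \Ws$ with $YF(K) = K$ by \ref{yosida}(iii). Finally I must show $F(K)$ embeds essentially into any other $B$ with $YB = K$; identifying $B$ with $\nu_B(B) \subseteq C(K)$, it suffices to show $F(K) \subseteq \nu_B(B)$ and that the inclusion is essential. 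For containment: a simple function $f = \sum_{i=1}^n r_i \chi_{U_i}$ with the $U_i$ clopen partitioning $K$ — I claim each $\chi_U$ ($U$ clopen) lies in $\nu_B(B)$. This should follow because $\nu_B(B)$ separates the points of $K$ and is a uniformly closed... wait, it need not be uniformly closed; but $\chi_U$ is idempotent under the lattice operations ($\chi_U = \chi_U \wedge \chi_U$), and one can build $\chi_U$ from a separating family by a compactness argument: cover $U$ and $K \setminus U$ by finitely many basic sets on which suitable elements of $\nu_B(B)$ are close to $1$ resp. $0$, then take finite sups/infs and truncate. Actually the cleaner route is: $\nu_B(B)$ separates points and contains constants, so by Stone–Weierstrass its uniform closure is $C(K)$; but more is needed. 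I would instead argue directly that any clopen $U$ yields $\chi_U \in \nu_B(B)$ because $B$, having Yosida space $K$ with $K$ zero-dimensional, must contain an idempotent corresponding to each clopen — this is where I expect to lean on a known fact (e.g. that the idempotents of $B$ correspond to the clopen sets of $YB$, a standard feature of the Yosida representation for $\Ws$). Granting $F(K) \subseteq \nu_B(B)$, essentiality of the embedding follows from \ref{yosida}(iv): $Y$ of the inclusion is $\iden_K$, which is (trivially) a cover, hence the inclusion is essential.

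\textbf{The hard direction ($K$ not zero-dimensional $\Rightarrow$ no minimum).} Suppose $K$ is compact Hausdorff but not zero-dimensional, and suppose toward a contradiction that $A$ is the minimum of $\{A' \in \Ws : YA' = K\}$, identified with a sublattice $\nu_A(A) \subseteq C(K)$. The strategy: produce another object $B$ with $YB = K$ such that $A$ does \emph{not} embed essentially into $B$, i.e., $\nu_A(A) \not\subseteq \nu_B(B)$ (since any inclusion of point-separating sublattices is automatically essential by the $Y = \iden_K$ argument, failure of minimality is exactly failure of containment). Because $K$ is not zero-dimensional, there are points $x \neq y$ in $K$ that cannot be separated by a clopen set; equivalently, the quasi-component of $x$ is nondegenerate. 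Pick some $f_0 \in \nu_A(A)$ with $f_0(x) \neq f_0(y)$ (exists since $\nu_A(A)$ separates points) — WLOG after affine adjustment $f_0(x) = 0$, $f_0(y) = 1$, $0 \le f_0 \le 1$. Now I want a sublattice $B$ containing the constants, separating points, but \emph{missing} $f_0$ (or missing some element of $\nu_A(A)$). The natural candidate: let $B$ be generated (as a vector lattice with constants) by $C(K/{\sim})$ pulled back along a quotient $K \to K/{\sim}$, for a well-chosen equivalence — but that would fail to separate points of $K$. Instead, the right move is to use a \emph{cover} of $K$: by \ref{Dlemma}-type reasoning $K$ has lots of proper covers when it isn't extremally disconnected, but covers change $Y$. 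So the genuinely correct approach is: since $K$ is not zero-dimensional there is a subalgebra trick — for any single $f \in \nu_A(A)$ whose range is an interval (such $f$ exists, else all ranges are totally disconnected and one shows $K$ is zero-dimensional), we can find a point-separating constant-containing sublattice $B$ that does not contain $f$; concretely take $B$ to be the sublattice generated by $\nu_A(A)$ together with no new elements but with $f$ replaced by finitely-many-valued approximations — except sublattices generated this way tend to recapture $f$. I expect \emph{this} to be the main obstacle: constructing, from the assumption of non-zero-dimensionality, a witness $B$ below which $A$ fails to sit. I would resolve it by the following cleaner characterization: the minimum of $\{A' : YA' = K\}$, if it exists, must be contained in \emph{every} point-separating constant-containing sublattice, hence contained in their intersection; so it suffices to show the intersection of all such sublattices separates points of $K$ only when $K$ is zero-dimensional, and that intersection is always $\subseteq F(K)$-ish. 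The intersection contains the constants; whether it separates $x$ from $y$ amounts to: is there a sublattice $B$ with $YB = K$ in which no element separates $x$ from $y$? If $x,y$ lie in a common quasi-component, one builds such $B$ as the functions that are "constant along that quasi-component up to the sublattice's resolution" — and one shows this still separates all other point-pairs and has Yosida space $K$. Making that construction precise, and verifying it still has Yosida space exactly $K$ (point-separation!), is the crux; I would handle it by choosing $B = \{ g \in C(K) : g$ factors through the quotient collapsing one specified quasi-component$\}$ and checking this separates points iff that quasi-component is a single point, while always containing constants and being a vector sublattice, and noting its Yosida space is $K$ precisely when it separates points — giving the contrapositive we want. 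The intersection over all choices of collapsible quasi-component then separates points of $K$ iff every quasi-component is a singleton iff $K$ is zero-dimensional, completing the argument.
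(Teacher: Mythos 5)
Your ($\Leftarrow$) direction is essentially the paper's argument and is sound; the point you hedge on (that $\chi_U \in \nu_B(B)$ for every clopen $U$) needs no ``idempotent'' fact: by Stone--Weierstrass $\nu_B(B)$ is uniformly dense in $C(K)$, so choose $b \in \nu_B(B)$ with $\| b - \chi_U \|_\infty < \frac{1}{4}$ and note that $\bigl( (2b - \frac{1}{2}) \vee 0 \bigr) \wedge 1 = \chi_U$ lies in $\nu_B(B)$, since $\nu_B(B)$ contains the constants and is closed under the vector lattice operations. (The paper packages this as: any $A$ with $YA=K$ $0$-$1$ separates disjoint closed sets of $K$.)

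The ($\Rightarrow$) direction, however, has a genuine gap. Your plan hinges on producing a member $B$ of the poset, i.e.\ with $YB = K$, ``in which no element separates $x$ from $y$,'' or concretely on $B = \{ g \in C(K) : g \text{ factors through the quotient collapsing a nondegenerate quasi-component} \}$. No such member can exist: by the Yosida theorem every $B$ with $YB=K$ separates \emph{all} pairs of points of $K$, and your concrete $B$ fails to separate points inside the collapsed quasi-component, so $YB \neq K$ and $B$ is not in the poset at all — hence it gives no information about the intersection of the members, which is the only intersection relevant to bounding a putative minimum. The intersection strategy can be saved, but it requires a different construction: for each $f \in C(K)$ with $f(x) \neq f(y)$ ($x,y$ in a common nondegenerate component) you must exhibit a member of the poset that \emph{omits} $f$ while still separating all points. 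This is exactly what the paper supplies with $\lc(A,p)$, the elements of $A$ locally constant at $p$: it is a $\Ws$-object with $Y\lc(A,p) = YA$ (point separation is checked by $0$-$1$ separating closed sets, in the two cases $p \in \{p_1,p_2\}$ and $p \notin \{p_1,p_2\}$), and if $K$ is not zero-dimensional then for any candidate minimum $A$ there is a $p$ with $\lc(A,p) \subsetneq A$ (a function locally constant at every point has finite range, hence is constant on each component, so an $f \in A$ separating two points of a nondegenerate component fails local constancy somewhere). A proper subobject with the same Yosida space is what defeats minimality; your proposal never produces one, and as written the step ``find a member with no element separating $x$ from $y$'' would fail.
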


\begin{proof}
($\Leftarrow$): Suppose $K$ is zero-dimensional and $A \in \Ws$ has $YA = K$. Then $A$ contains the constant function with value 1 on $YA$, and since $A$ is vector lattice, it must therefore contain all constant functions on $YA$. Moreover, since $A$ 0-1 separates closed sets in $YA$ (this follows from \ref{yosida}), one sees that $\chi(U) \in A$ for any $U \in \clop(YA)$, where $\chi(U)$ denotes the characteristic function of $U$. It follows that $F(K) \subseteq A$. Now $YF(K)=K$ because $K$ is zero-dimensional (so $F(K)$ separates the points of $K$), so $F(K)$ is indeed the desired minimum.

($\Rightarrow$): First, we establish some simple lemmas.

For $A \in \Ws$, $p \in YA$, and $a \in A$, we say that $a$ is \emph{locally constant at $p$} if there is a neighborhood $U$ of $p$ for which $a \vert_U$, the restriction of $a$ to $U$, is constant. Let $\lc(A,p)$ be the set of all $a \in A$ such that $a$ is locally constant at $p$.

\begin{lemma}\label{lem0}
$\lc(A,p) \in \Ws$ and $Y(\lc(A,p)) = YA$.
\end{lemma}

\begin{proof}
For $\otimes = +, -, \vee, \wedge$ and $a_1, a_2 \in \lc(A,p)$, with associated  neighborhoods $U_i$ of $p$, one sees that the neighborhood $U_1 \cap U_2$ of $p$ witnesses that $a_1 \otimes a_2 \in \lc(A,p)$.

Then, $\lc(A,p)$ separates points of $YA$, by cases of $p_1 \neq p_2$.
\end{proof}

\begin{lemma}\label{lem1}
If $a$ is locally constant at every $p \in YA$, then $a \in F(YA)$.
\end{lemma}

\begin{proof}
For every $p \in YA$, take a neighborhood $U_p$ of $p$ such that  $a$ takes the value $a(p)$ everywhere on $U_p$. Take $\{ U_1, \dots, U_k \}$ a finite subcover of $\{ U_p : p \in YA \}$. So $a(YA) = \bigcup \{ a(p_i) : i = 1, \dots, k  \}$ is finite.
\end{proof}

\begin{lemma}\label{lem2}
Suppose $A \in \Ws$. $A = F(YA)$ iff  $A = \lc(A,p)$ for every $p \in YA$, and in that case $YA$ is ZD.
\end{lemma}

\begin{proof}
($\Rightarrow$): obvious.

($\Leftarrow$): By \ref{lem1}, every $a \in A$ is also in $F(YA)$.
\end{proof}

We are now ready to complete the proof of \ref{Ymin}. Suppose $K$ is not zero-dimensional and $A \in \Ws$ with $YA = K$. Then, by \ref{lem2}, we have $A \neq F(K)$ and there is $p \in K$ such that $A \neq \lc(A,p)$. But $\lc(A,p) \subseteq A$ by definition, and $Y\lc(A,p)=K$ by \ref{lem0}. Thus $A$ is not the minimum.
\end{proof}

Here is a connection between $\V$ and $\mS$.

\begin{theorem}\label{VtoS}
$M$ is minimum in $\V(H,\mu(c))$ if and only if $YM$ is minimum in $\mS(YH,c)$, $YM$ is zero-dimensional, and $M=F(YM)$.
\end{theorem}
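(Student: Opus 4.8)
The plan is to prove both directions by combining Theorem~\ref{Ymin} with the functoriality of $Y$ and the definition of $\mu(c)$. Throughout, I would use the fact that for $A \in \Ws$ the Yosida space $Y\mu(c)A = cYA$ (immediate from the definition of $\mu$), and that $H = \mu(c)H$ means precisely $H = C(YH)$ with $cYH = YH$; in particular $YH$ is a fixed space of $c$, so $\mS(YH,c)$ is exactly the poset studied in Section~\ref{Q1answers}. I would also isolate at the outset the observation that $Y$ carries the order on $\V(H,\mu(c))$ to the order on $\mS(YH,c)$: if $A \xrightarrow{f} B$ is an essential embedding with $\nu_B \circ f = \nu_A$ (compatibly with the embeddings into $H$), then $Yf \colon YB \to YA$ is a cover by \ref{yosida}(iv), and the compatibility diagrams push down to covers over $YH$; conversely a cover $YA \xleftarrow{Yf} YB$ over $YH$ induces an essential $\Ws$-embedding $C(YA) \to C(YB)$, which restricts appropriately when $A = F(YA)$ and $B = F(YB)$ since $F$ is functorial on covers.

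For the forward direction, suppose $M$ is the minimum of $\V(H,\mu(c))$. First I would show $YM$ is zero-dimensional and $M = F(YM)$: if not, then by \ref{Ymin} (applied with $K = YM$) there is $A' \in \Ws$ with $YA' = YM$ and $A' < M$ in the poset $\{A : YA = YM\}$; one checks $A'$ still lies in $\V(H,\mu(c))$ (it has the same Yosida space as $M$, hence the same $\mu(c)$-hull $C(cYM) = C(cYH)$... here I must be careful that $\mu(c)A' = \mu(c)M = H$, which holds because $\mu(c)A' = C(cYA') = C(cYM) = \mu(c)M = H$) and $A' \le M$ strictly, contradicting minimality of $M$. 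Then I would show $YM$ is the minimum of $\mS(YH,c)$: given any $X \in \mS(YH,c)$, the object $C(X) \in \Ws$ satisfies $\mu(c)C(X) = C(cX) = C(YH) = H$ (using $cX = YH$), so $C(X) \in \V(H,\mu(c))$; minimality of $M$ gives an essential embedding $M \to C(X)$ over $H$, and applying $Y$ (using \ref{yosida}(iv)) yields a cover $YM \xleftarrow{} X$ over $YH$, i.e.\ $YM \le X$ in $\mS(YH,c)$.

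For the reverse direction, assume $YM$ is the minimum of $\mS(YH,c)$, $YM$ is zero-dimensional, and $M = F(YM)$. Given any $A \in \V(H,\mu(c))$, I have $C(cYA) = H = C(YH)$, and $cYA$ is the Yosida space of $H$, which is $YH$; so $cYA = YH$, meaning $YA \in \mS(YH,c)$. By minimality $YM \le YA$ in $\mS(YH,c)$, i.e.\ there is a cover $YM \xleftarrow{\sigma} YA$ compatible with the maps to $YH$. This induces an essential $\Ws$-embedding $C(YM) \xrightarrow{} C(YA)$, $f \mapsto f \circ \sigma$; since $\sigma$ is a covering map (onto, continuous), composing with a function of finite range gives a function of finite range, so this restricts to an embedding $F(YM) = M \hookrightarrow F(YA) \subseteq A$ (the last inclusion by \ref{Ymin}'s $(\Leftarrow)$ argument, since $YA$ is... wait, $YA$ need not be zero-dimensional, but $F(YA) \subseteq A$ always holds when it makes sense; actually I need $F(YA) \subseteq A$ which by the argument in \ref{Ymin} requires $YA$ zero-dimensional — instead I should note $M = F(YM) \to C(YA)$ lands inside $A$ because the composite functions $\chi(U) \circ \sigma$ for $U \in \clop(YM)$ are clopen-characteristic on $YA$ and $A$ 0-1-separates closed sets of $YA$, hence contains them). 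Checking that this embedding is essential and compatible with the embeddings into $H$ completes $M \le A$, so $M$ is the minimum.

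The main obstacle I anticipate is the bookkeeping around compatibility: ensuring at each step that the embeddings produced sit correctly over $H$ (respectively the covers sit correctly over $YH$), so that "$\le$" in $\V(H,\mu(c))$ and "$\le$" in $\mS(YH,c)$ genuinely correspond under $Y$ — and, relatedly, verifying the delicate point that $M = F(YM) \to C(YA)$ actually factors through $A \subseteq C(YA)$ even when $YA$ is not zero-dimensional, which needs the 0-1-separation property from \ref{yosida} rather than a direct appeal to \ref{Ymin}. The purely order-theoretic parts and the invocations of \ref{Ymin} and \ref{yosida}(iv) should be routine once that compatibility is pinned down.
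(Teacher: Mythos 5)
Your proof is correct and follows essentially the same route as the paper: both directions reduce membership in $\V(H,\mu(c))$ to the condition $cYA=YH$, use objects of the form $C(X)$ and \ref{yosida}(iv) to pass between essential embeddings over $H$ and covers over $YH$, and invoke \ref{Ymin} for the zero-dimensionality of $YM$ and the identification $M=F(YM)$. The only difference is that you spell out (via characteristic functions of clopen sets and $0$-$1$ separation) why $F(YM)$ embeds into $A$ along the cover $YA\to YM$, a step the paper's proof leaves implicit.
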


\begin{proof}

Keep in mind the following:
\begin{itemize}
\item[(i)] $A \in \V(H,\mu(c))$ if and only if $cYA = YH$ (and $cYH=YH$);
\item[(ii)] $A \in \V(H,\mu(c))$ and  $YA' = YA$ implies $A' \in \V(H,\mu(c))$.
\end{itemize}

($\Rightarrow$) Suppose $\V(H,\mu(c))$ has minimum $M$ with $YM=X_0$. Then $cX_0 = YH$ by (i), so $X_0 \in \mS(YH,c)$. Now, if $X \in \mS(c,YH)$, then $X \leftarrow cX = YH$,  so $C(X) \in \V(H,\mu(c))$ by (i) since $YC(X)=X$. Thus there is an essential $\Ws$-embedding $M \rightarrow C(X)$ by the minimality of $M$, so $X_0 = YM \leftarrow YC(X) = X$ and $X_0$ is minimum in $\mS(c,YH)$. Finally, take $A \in \Ws$ with $YA = X_0$. Then $A \in \V(H,\mu(c))$ by (ii), so there is an essential $\Ws$-embedding $M \rightarrow A$ by the minimality of $M$. Hence $YM$ is zero-dimensional and $M=F(YM)$ by \ref{Ymin}.

($\Leftarrow$): Suppose $YM$ is minimum in $\mS(YH,c)$, $YM$ is zero-dimensional, and $M=F(YM)$. Then $M$ is minimum in $\{ A \in \Ws : YA = YM \}$ by \ref{Ymin}, and $M \in \V(H,\mu(c))$ by (i). Now if $A \in \V(H,\mu(c))$, then $YA \in \mS(YH,c)$ by (i), so $YM \leftarrow YA$ by the minimality of $YM$.  Thus there is an essential $\Ws$-embedding $M \rightarrow A$ and $M$ is minimum in $\V(H,\mu(c))$.
\end{proof}

Now we combine \ref{VtoS} with our results on $\mS(K,c)$ for various $c$ from the previous section.

\begin{theorem}\label{u}
 If $K \in \Comp$, and if $u \in \hoWs$ is the uniform completion operator, then $\V(C(K),u)$ has a minimum if and only if $K$ is zero-dimensional, and in that case the minimum is $F(K)$.
\end{theorem}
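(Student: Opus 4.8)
The plan is to derive this directly from the general correspondence in Theorem~\ref{VtoS}, combined with the fact that $u = \mu(\iden)$ established in Theorem~\ref{mu(id)=u}. Since $u = \mu(\iden)$, we have $\V(C(K),u) = \V(C(K),\mu(\iden))$. Note that $K \in \Comp$ and $C(K) \in \Ws$ with $Y C(K) = K$ (this is the uniqueness clause \ref{yosida}(iii), since the constant functions and the obvious embedding $C(K) \hookrightarrow C(K)$ separate points of $K$), so $\iden K = K$ and $C(K)$ plays the role of $H$ with $YH = K$. Thus $\V(C(K),u)$ has a minimum $M$ if and only if the three conditions of \ref{VtoS} hold: $YM$ is minimum in $\mS(K,\iden)$, $YM$ is zero-dimensional, and $M = F(YM)$.

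Next I would unwind what $\mS(K,\iden)$ is. Since $\iden$ is the identity covering operator, the first theorem of Section~\ref{Q1answers} gives $\mS(K,\iden) = \{K\}$, whose minimum is $K$ itself. So the condition ``$YM$ is minimum in $\mS(K,\iden)$'' forces $YM = K$. Feeding this back into the other two conditions of \ref{VtoS}: a minimum $M$ of $\V(C(K),u)$ exists if and only if $K$ is zero-dimensional and $M = F(K)$. This gives exactly the claimed equivalence, with $F(K)$ as the minimum when $K$ is zero-dimensional.

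For the converse direction it is worth double-checking directly that when $K$ is zero-dimensional, $F(K)$ does indeed land in $\V(C(K),u)$: we need $u F(K) = C(K)$, i.e. $C(Y F(K)) = C(K)$, which holds because $Y F(K) = K$ for zero-dimensional $K$ (as observed in the proof of \ref{Ymin}, $F(K)$ separates points of $K$), and then $\mu(\iden) F(K) = C(Y F(K)) = C(K)$ by definition of $\mu$ and Theorem~\ref{mu(id)=u}. This is the one place where zero-dimensionality of $K$ is genuinely used, and it is also the only mild obstacle: one must make sure the hypotheses of \ref{VtoS} are invoked with the correct object in the role of $H$, namely that $Y C(K) = K$ so that $C(K)$ is its own uniform completion and the statement ``$\V(C(K),u)$'' is the instance ``$\V(H,\mu(\iden))$'' with $YH = K$. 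Once that bookkeeping is in place, the proof is a one-line citation of \ref{VtoS} together with $\mS(K,\iden) = \{K\}$.

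So the write-up will be very short: cite \ref{mu(id)=u} to replace $u$ by $\mu(\iden)$, cite the triviality $\mS(K,\iden) = \{K\}$, and cite \ref{VtoS}; the main (and only) thing requiring a sentence of care is confirming $Y C(K) = K$ via \ref{yosida}(iii) so that $C(K)$ legitimately occupies the slot of ``$H$ with $hH = H$.''
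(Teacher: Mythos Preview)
Your proposal is correct and follows essentially the same route as the paper: both reduce the question to Theorem~\ref{VtoS} with $c=\iden$, using that $\mS(K,\iden)=\{K\}$ has minimum $K$, so that the conditions of \ref{VtoS} collapse to ``$K$ is zero-dimensional and $M=F(K)$.'' The paper's write-up is terser---it unpacks $uA=C(YA)$ directly to identify $\V(C(K),u)$ with $\{A:YA=K\}$ rather than explicitly invoking $u=\mu(\iden)$ from \ref{mu(id)=u}---but the substance is the same, and your more explicit bookkeeping (verifying $YC(K)=K$ and $YF(K)=K$) is a reasonable addition.
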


\begin{proof}
If $A \in \Ws$, then $uA = C(YA)$ by the Stone-Weierstrass theorem. Since $YA$ and $K$ are compact, $C(YA) = C(K)$ implies $YA$ and $K$ are homeomorphic, so $\V(C(K),u) = \{ A \in \Ws : YA = K \}$, and by \ref{VtoS} this has minimum if and only if $K$ is zero-dimensional, and in that case the minimum is $F(K)$. 
\end{proof}

\begin{remark}
This whole project arose from \ref{u}, which is a view of the Stone-Weierstrass situation.
\end{remark}

\begin{theorem}
 Suppose $E \in \Comp$ is infinite and extremally disconnected, and $a(\gamma)$ is an atom of the lattice $\coC$ associated with $E$ as in \ref{a(gamma)}.
 \begin{itemize}
 \item[(i)] $\V(C(E),\mu(a(\gamma)))$ has minimum $F(\dot{E}_{\gamma})$.
 \item[(ii)] If $Y \in \Comp$ and $Y \neq \dot{E}_{\gamma}$ or $E$, then $\V(C(Y),\mu(a(\gamma)))$ has minimum if and only if $Y$ is zero-dimensional, and in that case the minimum is $F(Y)$.
 \end{itemize}
 \end{theorem}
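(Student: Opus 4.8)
The plan is to deduce both parts from Theorem \ref{VtoS} applied with $c = a(\gamma)$, combined with the explicit description of $\mS(Y,a(\gamma))$ from Theorem \ref{a(gamma)}. Recall that $a(\gamma)$ fixes every space except $\dot{E}_{\gamma}$, and that $YC(Y) = Y$ for any $Y \in \Comp$, so $\mu(a(\gamma))C(Y)$ has Yosida space $a(\gamma)Y$, which equals $Y$ unless $Y \cong \dot{E}_{\gamma}$, in which case it equals $E$. In particular $H = C(E)$ has $\mu(a(\gamma))$-fixed status (since $a(\gamma)E = E$, because $E$ is extremally disconnected and not homeomorphic to $\dot E_\gamma$), and similarly $H = C(Y)$ is $\mu(a(\gamma))$-fixed for $Y$ as in (ii). So both $\V(C(E),\mu(a(\gamma)))$ and $\V(C(Y),\mu(a(\gamma)))$ make sense.

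For part (i): by Theorem \ref{VtoS}, $M$ is minimum in $\V(C(E),\mu(a(\gamma)))$ iff $YM$ is minimum in $\mS(E,a(\gamma))$, $YM$ is zero-dimensional, and $M = F(YM)$. By Theorem \ref{a(gamma)}(i), $\mS(E,a(\gamma)) = \{\dot{E}_{\gamma}, E\}$ with minimum $\dot{E}_{\gamma}$. So the candidate is $M = F(\dot{E}_{\gamma})$, and I must check $\dot{E}_{\gamma}$ is zero-dimensional. Here I would argue: $E$ is extremally disconnected, hence zero-dimensional; $E_{\gamma} = E_{pq} \setminus \{\gamma(p)\}$ is an open (indeed clopen-generated) subspace of the quotient, and one checks $\dot{E}_{\gamma}$ — the one-point compactification — is zero-dimensional because $E_{\gamma}$ is locally compact zero-dimensional (the quotient map is a homeomorphism off $\{p,q\}$, and a neighborhood base of clopen sets at the identified point is obtained from clopen neighborhoods of $\{p,q\}$ in $E$ saturated under the identification). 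Granting that, Theorem \ref{VtoS} gives that $F(\dot E_\gamma)$ is the minimum of $\V(C(E),\mu(a(\gamma)))$.

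For part (ii): take $Y \in \Comp$ with $Y \neq E, \dot{E}_{\gamma}$ (up to homeomorphism). By Theorem \ref{a(gamma)}(ii), $\mS(Y,a(\gamma)) = \{Y\}$, so $Y$ is trivially the minimum of $\mS(Y,a(\gamma))$. Applying Theorem \ref{VtoS} with $c = a(\gamma)$ and $H = C(Y)$: $M$ is minimum in $\V(C(Y),\mu(a(\gamma)))$ iff $YM$ is minimum in $\mS(Y,a(\gamma))$ (forcing $YM = Y$), $YM = Y$ is zero-dimensional, and $M = F(Y)$. Hence: if $Y$ is zero-dimensional, the minimum exists and equals $F(Y)$; if $Y$ is not zero-dimensional, the third and second conditions cannot be met, so no minimum exists. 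This is exactly the claimed statement, and indeed it parallels Theorem \ref{u} (which is the $c = \iden$ case) — unsurprising since $a(\gamma)$ agrees with $\iden$ away from the single space $\dot E_\gamma$.

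The only real obstacle is the topological fact that $\dot{E}_{\gamma}$ is zero-dimensional, needed in part (i); everything else is a direct substitution into Theorems \ref{VtoS} and \ref{a(gamma)}. I would handle that fact by the neighborhood-base argument sketched above: away from $\gamma(p)$ the quotient is a homeomorphic copy of an open subset of $E$, so has a clopen base; at $\gamma(p)$, complements of clopen sets of $E_\gamma$ with compact closure form a clopen neighborhood base, using that in the extremally disconnected $E$ one can separate $\{p,q\}$ from any closed set missing it by a clopen set, then saturate. If one prefers to avoid even this, one can instead note $\dot E_\gamma$ is a cover of $E$ (via $\gamma$) — no, that goes the wrong way; the cleanest route really is the direct verification that $E_\gamma$ is locally compact and zero-dimensional and invoke that the one-point compactification of a locally compact zero-dimensional space is zero-dimensional.
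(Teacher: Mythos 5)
Your proof is correct and follows essentially the same route as the paper: deduce both parts by feeding Theorem \ref{a(gamma)} into Theorem \ref{VtoS}. The only difference is that you spell out why $\dot{E}_{\gamma}$ is zero-dimensional (saturated clopen neighborhoods of $\{p,q\}$ in the zero-dimensional $E$ descend to a clopen base at $\gamma(p)$), a fact the paper simply asserts; your sketch of this is sound.
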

 
 \begin{proof}
$\mS(E,a(\gamma))$ has minimum $\dot{E}_{\gamma}$ by \ref{a(gamma)}(i). Since $\dot{E}_{\gamma}$ is zero-dimensional, statement (i) here follows from \ref{VtoS}.

Similarly, by \ref{a(gamma)}(ii), $\mS(Y,a(\gamma))$ has minimum $Y$, so statement (ii) here follows from \ref{VtoS}.
 \end{proof}
 
 \begin{remark}
 While $a(\gamma)$ is an atom of the lattice $\coC$, the operator $\mu(a(\gamma))$ is not an atom in the lattice $\hoWs$ because the uniform completion operator $u$ satisfies $u \leq \mu(a(\gamma))$ and $u \neq \mu(a(\gamma))$ as 
 \[
 uC(\dot{E}_{\gamma}) = C(\dot{E}_{\gamma}) < C(E) = \mu(a(\gamma))C(E).
 \]
 \end{remark}
 
 \begin{theorem}\label{F(aD)}
 If $E \in \Comp$ is infinite and extremally disconnected, and if $e \in \hoWs$ is the essential completion operator, then $\V(C(E),e)$ has a minimum if and only if $E$ is $\beta D$ for some discrete space $D$, and in that case the minimum is $F(\alpha D)$.
\end{theorem}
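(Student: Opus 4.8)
The plan is to deduce this from \ref{c=g2} by transporting across the bridge \ref{VtoS}, using the identification $e = \mu(g)$ from \ref{mu(g)=e}. First I would set up the dictionary between the two sides. Since the canonical copy of $C(E)$ separates the points of the compact Hausdorff space $E$, the uniqueness clause \ref{yosida}(iii) (applied with $\phi = \iden$) gives $YC(E) = E$; and since $gE = E$ (as $E$ is extremally disconnected), one has $eC(E) = \mu(g)C(E) = C(gYC(E)) = C(gE) = C(E)$, so $C(E) \in \fix(e)$ and $\V(C(E),e)$ is legitimately defined. Taking $H = C(E)$, $c = g$ (so $YH = E$), Theorem \ref{VtoS} reads: $M$ is the minimum of $\V(C(E),e) = \V(C(E),\mu(g))$ if and only if $YM$ is the minimum of $\mS(E,g)$, $YM$ is zero-dimensional, and $M = F(YM)$.

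For the forward direction I would take a minimum $M$ of $\V(C(E),e)$; then $YM$ is the minimum of $\mS(E,g)$, so \ref{c=g2} (applicable since $E$ is infinite) forces $E = \beta D$ for some discrete $D$ and $YM = \alpha D$, whence $M = F(YM) = F(\alpha D)$. For the converse, given $E = \beta D$, Theorem \ref{c=g2} yields that $\mS(E,g)$ has minimum $\alpha D$; I would then check that $\alpha D$ is zero-dimensional (the singletons of $D$ together with the cofinite sets form a clopen base) and that $F(\alpha D)$ has Yosida space $\alpha D$ (its clopen characteristic functions lie in $F(\alpha D)$ and separate the points of $\alpha D$, so \ref{yosida}(iii) applies again). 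With those in hand, $M := F(\alpha D)$ satisfies all three conditions on the right side of \ref{VtoS} — $YM = \alpha D$ is minimum in $\mS(E,g)$, $YM$ is zero-dimensional, and $M = F(YM)$ — so $F(\alpha D)$ is the minimum of $\V(C(E),e)$.

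I do not anticipate a genuine obstacle here: all the substance lies in Theorems \ref{c=g}/\ref{c=g2} and \ref{VtoS}, which are already proved, and the present argument is essentially bookkeeping. The only points demanding a small check are the two routine identifications $YC(E) = E$ and $YF(\alpha D) = \alpha D$, both immediate from the Yosida representation theorem together with the zero-dimensionality of $\alpha D$; and one should note that the infinitude hypothesis on $E$ is used precisely to invoke \ref{c=g2} (and forces the discrete space $D$ to be infinite).
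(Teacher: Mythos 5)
Your proposal is correct and follows essentially the same route as the paper: the paper's proof is exactly the combination $e=\mu(g)$ (Theorem \ref{mu(g)=e}), $YC(E)=E$, Theorem \ref{c=g}, and the bridge Theorem \ref{VtoS}, with your write-up merely making explicit the routine verifications (zero-dimensionality of $\alpha D$ and $YF(\alpha D)=\alpha D$) that the paper leaves implicit.
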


\begin{proof}
Let $g \in \coC$ be the Gleason covering operator. Then $\mu(g)=e$. Since $YC(E)=E$, the desired result follows from \ref{c=g} and \ref{VtoS}.
\end{proof}

\begin{theorem}
\begin{itemize}
\item[(i)] $\V(C(\bN),\mu(qF))$ has minimum $F(\aN)$.
\item[(ii)] $\V(C(K),\mu(qF))$ has no minimum if $K \in \Comp$ is a connected almost-$P$ space.
\item[(iii)] $\V(C(K),\mu(qF))$ has minimum $F(K)$ if $K \in \Comp$ is an almost-$P$ space and $K \in \bf{ZD}$.
 \end{itemize}
\end{theorem}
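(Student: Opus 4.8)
The plan is to derive all three parts directly from Theorem \ref{VtoS} (which characterizes minimums of $\V(H,\mu(c))$ in terms of minimums of $\mS(YH,c)$) combined with Theorem \ref{c=qF} (which handles $\mS(K,qF)$ for $K$ discrete/extremally disconnected or almost-$P$). The one standing observation I would record first is that $YC(K) = K$ for every $K \in \Comp$: the identity embedding $C(K) \hookrightarrow C(K)$ separates the points of $K$, so Yosida uniqueness \ref{yosida}(iii) gives $YC(K) \cong K$. In every case below $H = C(K)$ (or $C(\bN)$), hence $YH = K$, and the covering operator attached to $\mu(qF)$ is $qF$.

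For (i), I would invoke \ref{c=qF}(i) to get that $\mS(\bN,qF)$ has minimum $\aN$, then note that $\aN$, being the one-point compactification of a discrete space, is zero-dimensional, and that $F(\aN)$ has Yosida space $\aN$ with $F(\aN) = F\bigl(YF(\aN)\bigr)$; thus \ref{VtoS} applies with $M = F(\aN)$. For (iii), since $K$ is almost-$P$, \ref{c=qF}(ii) gives $\mS(K,qF) = \{K\}$, whose unique (hence minimum) element is zero-dimensional by hypothesis, and again $F(K) = F\bigl(YF(K)\bigr)$ with $YF(K) = K$; so \ref{VtoS} applies with $M = F(K)$.

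For (ii), I would argue by contradiction: if $\V(C(K),\mu(qF))$ had a minimum $M$, then by \ref{VtoS} the Yosida space $YM$ would be a minimum of $\mS(K,qF)$ and would be zero-dimensional. But $K$ almost-$P$ forces $\mS(K,qF) = \{K\}$ by \ref{c=qF}(ii), hence $YM = K$; since a connected compact Hausdorff space with more than one point is not zero-dimensional, this is a contradiction.

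The pleasant feature here is that there is no real obstacle: the substantive work has all been done in \ref{VtoS} and \ref{c=qF}, and what remains is bookkeeping. The only point deserving any care is the final one, namely observing that connectedness of a nondegenerate $K$ is incompatible with the zero-dimensionality that \ref{VtoS} demands of the Yosida space of a minimum. (Indeed the same argument shows more generally that $\V(C(K),\mu(qF))$ has no minimum whenever $K$ is almost-$P$ and not zero-dimensional, with (ii) the connected instance.)
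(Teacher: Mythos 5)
Your proof is correct and takes exactly the paper's route: the paper's own proof of this theorem is a one-line citation of \ref{c=qF} and \ref{VtoS}, and you have simply supplied the bookkeeping (zero-dimensionality of $\aN$ and of $K$, the identification $YC(K)=K$, $M=F(YM)$, and the observation that a nondegenerate connected $K$ cannot be the zero-dimensional Yosida space \ref{VtoS} demands of a minimum). Your parenthetical caveat ``more than one point'' and the remark that the argument gives non-existence of a minimum for any non-zero-dimensional almost-$P$ space are both accurate refinements of the intended statement.
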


\begin{proof}
(i)--(iii) follow from \ref{c=qF} and \ref{VtoS}.
\end{proof}

\begin{remark}
We have completely answered our ``minimum questions" for $g$ and $\mu(g) = e$ (which is also Dedekind completion in $\bf{W}^*$), and made progress on the questions for $qF$ and $\mu(qF)$ (which is ``$o$-Cauchy completion" in $\bf{W}^*$; see \cite{DHH80}). Since $\coC$ and $\hoWs$ are proper classes, a lot has been left out. Notable perhaps (and between $qF$ and $g$) is the basically disconnected cover and its image under $\mu$ (which is Dedekind $\sigma$-completion in $\bf{W}^*$). Also, there is the projectable hull $p$ in $\bf{W}^*$, which would seem to require different methods since it is not in the range of $\mu$ (see \ref{mustuff}(iii) above).
\end{remark}











\normalsize

\end{document}